\tikzset{every picture/.style={line width=0.75pt}} 
\numberwithin{equation}{section}
\theoremstyle{plain}
\newtheorem{theorem}{Theorem}[section]
\newtheorem{lemma}[theorem]{Lemma}
\newtheorem{proposition}[theorem]{Proposition}
\DeclareMathOperator{\Ima}{Im}
\def\moverlay{\mathpalette\mov@rlay}
\def\mov@rlay#1#2{\leavevmode\vtop{%
   \baselineskip\z@skip \lineskiplimit-\maxdimen
   \ialign{\hfil$\m@th#1##$\hfil\cr#2\crcr}}}
\newcommand{\charfusion}[3][\mathord]{
    #1{\ifx#1\mathop\vphantom{#2}\fi
        \mathpalette\mov@rlay{#2\cr#3}
      }
    \ifx#1\mathop\expandafter\displaylimits\fi}
\def\author@andify{%
  \nxandlist {\unskip ,\penalty-1 \space\ignorespaces}%
    {\unskip {} \@@and~}%
    {\unskip \penalty-2 \space \@@and~}%
}
\title[Groups with finitely many long commutators of maximal order//Groups with finitely many long commutators of maximal order]{Groups with finitely many long commutators of maximal order}
\author[I. de las Heras]{Iker de las Heras} 
\address{Iker de las Heras: Department of Mathematics, Euskal Herriko Unibertsitatea UPV/EHU, Basque Country}
\email{iker.delasheras@ehu.eus}
\date{}
\author[F. Di Concilio]{Federico Di Concilio} 
\address{Federico Di Concilio: Department of Mathematics, Università degli Studi di Salerno, Italy}
\email{fediconcilio@unisa.it}
\date{}
\author[P. Shumyatsky]{Pavel Shumyatsky} 
\address{Pavel Shumyatsky: Department of Mathematics, University of Brasilia, Brazil}
\email{pavel@unb.br}
\date{}
\thanks{The first author is supported by the Spanish Government, grant PID2020-117281GB-I00 (partly with FEDER funds), and by the Basque Government, grant IT483-22.
During a large part of this research, he was also supported by the European Union via the Marie Skłodowska-Curie Actions (MSCA), grant HE-MSCA-PF-GF22/02 101067088. The second author is partially supported by ``National Group for Algebraic and Geometric Structures, and their Applications'' (GNSAGA-INdAM). The third author acknowledges the support of FAPDF and SNPq.}
\begin{document}

\begin{abstract}
Given a group $G$ and elements $x_1,x_2,\dots, x_\ell\in G$, the commutator of the form $[x_1,x_2,\dots, x_\ell]$ is called a commutator of length $\ell$. The present paper deals with groups having only finitely many commutators of length $\ell$ of maximal order. We establish the following results.\medskip

\noindent Let $G$ be a residually finite group with finitely many commutators of length $\ell$ of maximal order. Then $G$ contains a subgroup $M$ of finite index such that $\gamma_\ell(M)=1$. Moreover, if $G$ is finitely generated, then $\gamma_\ell(G)$ is finite (Theorem \ref{thm: main 1}).\medskip

\noindent Let $\ell,m,n,r$ be positive integers and $G$ an $r$-generator group with at most $m$ commutators of length $\ell$ of maximal order $n$. Suppose that either $n$ is a prime power, or $n=p^{\alpha}q^{\beta}$, where $p$ and $q$ are odd primes, or $G$ is nilpotent.
Then $\gamma_\ell(G)$ is finite of $(m,\ell,r)$-bounded order and there is a subgroup $M\le G$ of $(m,\ell,r)$-bounded index such that $\gamma_\ell(M)=1$ (Theorem \ref{thm: main}).
\end{abstract}

\maketitle

\section{Introduction}

In \cite{CoVe19} Cocke and Venkataraman proved that if $G$ is a group with $m$ elements of maximal order, where $m$ is a positive integer, then the order of $G$ is $m$-bounded. The result gave rise to the work \cite{lmst}, where groups with finitely many commutators of maximal order were studied. The results in \cite{lmst} show that if $G$ is a group with only $m$ commutators of maximal order, then, subject to some additional natural constraints, $G$ has a subgroup $M$ of $m$-bounded index such that the order of the commutator subgroup $M'$ is $m$-bounded.

Throughout this article, we say that a quantity is $(n_1, \dots , n_s)$-bounded to abbreviate “is finite and bounded above in terms of $n_1,\dots,n_s$ only”.

Given a group $G$ and elements $x_1,x_2,x_3,\dots, x_\ell\in G$, we use the simple-commutator notation for left-normed commutators $[x_1,x_2,x_3,\dots, x_\ell]=[...[[x_1,x_2],x_3],\dots ,x_\ell]$. Any commutator of the form $[x_1,x_2,x_3,\dots, x_\ell]$ is called a \emph{commutator of length $\ell$}. 
The present paper deals with groups having at most $m$ commutators of length $\ell$ of maximal order for $\ell\ge 3$, which turned out to be much more challenging than the case $\ell=2$ considered in \cite{lmst}.

Our first result is as follows. As usual, we write $\gamma_i(G)$ to denote the $i$-th term of the lower central series of a group $G$.

\begin{theorem}
\label{thm: main 1}
Let $G$ be a residually finite group with finitely many commutators of length $\ell$ of maximal order. Then $G$ contains a subgroup $M$ of finite index such that $\gamma_\ell(M)=1$. Moreover, if $G$ is finitely generated, then $\gamma_\ell(G)$ is finite.
\end{theorem}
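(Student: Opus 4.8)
\noindent\textit{Proof proposal.}\ Let $D$ denote the set of commutators of length $\ell$ in $G$. The hypothesis presupposes that the orders of the elements of $D$ are bounded, so let $n$ be the maximal such order and put $T=\{\,c\in D:|c|=n\,\}$. Then $T$ is finite, of size $m$ say, and it is invariant under conjugation, since $[x_1,\dots,x_\ell]^g=[x_1^g,\dots,x_\ell^g]$ is again a commutator of length $\ell$ of the same order. In particular $\langle T\rangle$ is normal, $\langle T\rangle\le\gamma_\ell(G)$, and $C_G(T)$ has finite index in $G$, being the kernel of the conjugation homomorphism $G\to\mathrm{Sym}(T)$. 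The plan is to prove the first assertion by induction on $n$, and then to treat the finitely generated case separately.

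\smallskip
If $n=1$ every commutator of length $\ell$ is trivial, so $\gamma_\ell(G)=1$ and $M=G$ works. Assume $n>1$, and pass to the finite-index subgroup $G_1:=C_G(T)$. The elements of $T\cap\gamma_\ell(G_1)$ lie in $\gamma_\ell(G_1)\le G_1$ and are centralized by $G_1$, hence they generate a finite central normal subgroup $W$ of $G_1$ (it is abelian and generated by at most $m$ elements of order $n$). In $G_1/W$ every commutator of length $\ell$ has order $<n$: a lift of a commutator of length $\ell$ of $G_1/W$ of order $n$ is a commutator of length $\ell$ of $G$ of order a multiple of $n$ and at most $n$, hence of order $n$, so it lies in $T\cap\gamma_\ell(G_1)\subseteq W$ and maps to the identity. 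Thus the maximal order of a commutator of length $\ell$ of $G_1/W$ is some $n'<n$. The crucial point — discussed below — is that $G_1/W$ still has only finitely many commutators of length $\ell$ of this new maximal order $n'$. Granting it, and noting that $G_1/W$ is residually finite (a quotient of the residually finite group $G_1$ by a finite normal subgroup), the induction hypothesis yields a finite-index $\bar M\le G_1/W$ with $\gamma_\ell(\bar M)=1$; pulling $\bar M$ back to a finite-index subgroup $M_1\le G_1$ (so $\gamma_\ell(M_1)\le W$) and intersecting with a finite-index subgroup of $G_1$ meeting $W$ trivially (available by residual finiteness of $G_1$), we obtain a finite-index $M\le G$ with $\gamma_\ell(M)=1$.

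\smallskip
For the finitely generated case, apply the first part to get a finite-index $M\le G$ with $\gamma_\ell(M)=1$; then $G$ has a nilpotent subgroup of class $<\ell$ and finite index, so $G$ is polycyclic-by-finite and $\gamma_\ell(G)$ is a finitely generated normal subgroup. Since $\gamma_\ell(G)=\langle D\rangle$, it suffices to prove that $D$ is finite: if so, $D$ is a union of finitely many finite $G$-conjugacy classes, hence $D$ lies in the $FC$-centre of $G$, so $\gamma_\ell(G)$ is a finitely generated $FC$-group; by a theorem of B.\,H.\ Neumann its derived subgroup is then finite, while $\gamma_\ell(G)/\gamma_\ell(G)'$ is a finitely generated abelian group generated by the images of the elements of $D$, all of order at most $n$, and is therefore finite — so $\gamma_\ell(G)$ is finite. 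Finiteness of $D$ should in turn follow from the structure of polycyclic-by-finite groups together with the hypothesis: were $\gamma_\ell(G)$ infinite it would contain a nontrivial torsion-free abelian normal subgroup $A$ of $G$ on which $G$ acts through a finite quotient, and the presence of such an $A$ inside $\langle D\rangle$, combined with the fact that conjugation-invariance forces an infinite family of commutators of length $\ell$ inside it to contain infinitely many of maximal order, is incompatible with there being only finitely many maximal-order commutators — the remaining details being bookkeeping with Hirsch length.

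\smallskip
The main obstacle is precisely the crucial point invoked in the inductive step: that after replacing $G$ by the finite-index subgroup $G_1=C_G(T)$ and factoring out the finite central subgroup $W$, one still has only finitely many commutators of length $\ell$ of the (strictly smaller) maximal order $n'$. The full set $D$ of commutators of length $\ell$ need not be finite — the theorem does not assert this for non-finitely-generated $G$ — and for $\ell\ge3$ a commutator of intermediate order is far less tightly linked to the top-order commutators than in the case $\ell=2$ treated in \cite{lmst}; extracting the required control from the action of $G$ on $\langle T\rangle$ and its sections is where the real work of Theorem \ref{thm: main 1} lies.
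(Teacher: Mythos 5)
Your proposed induction on the maximal order $n$ breaks down at exactly the point you flag as ``crucial,'' and the obstruction is structural, not merely technical. After passing to $G_1=C_G(T)$ and factoring out $W=\langle T\cap\gamma_\ell(G_1)\rangle$, the quotient $G_1/W$ has a strictly smaller maximal commutator order $n'$, but there is no control whatsoever on how many length-$\ell$ commutators in $G_1/W$ attain order $n'$. A length-$\ell$ commutator in $G_1$ of some intermediate order strictly between $n'$ and $n$ can map to a length-$\ell$ commutator of order exactly $n'$ in $G_1/W$, and the hypothesis says nothing about commutators of non-maximal order. Lifting a length-$\ell$ commutator $\bar c$ of order $n'$ in $G_1/W$ only tells you that a lift $\tilde c$ satisfies $\tilde c^{\,n'}\in W$, which constrains its order to finitely many values but leaves room for infinitely many such $\tilde c$; your argument that the lift must land in $T$ only works in the special case $n'\mid n$ and $|\tilde c|=n$. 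So the inductive hypothesis is simply not inherited, and the induction on $n$ cannot get off the ground. The paper avoids this entirely: since $\mathcal{D}_\ell$ is a finite normal subset of bounded exponent, Dicman's Lemma makes $D=\langle\mathcal{D}_\ell\rangle$ finite, and residual finiteness supplies a normal finite-index $M$ with $M\cap D=1$; the whole content is then a direct (double, reverse) induction using the commutator expansion of Lemma~\ref{lem: iker marta 2} to show $\gamma_\ell(M)\le D$, with no reduction of $n$ at any stage.

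The finitely generated case also has a genuine hole. You correctly reduce to showing that the set of \emph{all} length-$\ell$ commutators of $G$ is finite (which, given bounded orders, is indeed equivalent to $\gamma_\ell(G)$ being finite), but the closing paragraph does not prove it. The sketch via a torsion-free abelian normal subgroup $A\le\gamma_\ell(G)$ is not pursuable as stated: since every length-$\ell$ commutator has finite order, the only such commutator inside the torsion-free $A$ is trivial, so $A$ contributes no maximal-order commutators at all, and the claimed contradiction ``conjugation-invariance forces infinitely many of maximal order inside $A$'' does not arise. Hirsch-length bookkeeping won't rescue this line without a substantially new idea. The paper instead notes that $M/T\le Z_{\ell-1}(G/T)$ for a suitable finite $T\le M$ built from the length-$\ell$ commutators lying in $M$, and then invokes the Baer-type result (Lemma~\ref{lem: basic results}(iii)) to conclude that $\gamma_\ell(G/T)$, hence $\gamma_\ell(G)$, is finite; this sidesteps any attempt to show the full set of length-$\ell$ commutators is finite in advance.
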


We do not know whether the assumption that $G$ is residually finite is necessary in Theorem \ref{thm: main 1}. Also, it is unclear whether the index of $M$ and the order of $\gamma_\ell(G)$ can be bounded in terms of the relevant parameters. Partial positive answers to these questions are given in the next theorem.

\begin{theorem}
\label{thm: main} Let $\ell,m,n,r$ be positive integers and $G$ an $r$-generator group with at most $m$ commutators of length $\ell$ of maximal order $n$. Suppose that one of the following holds:
    \begin{itemize}
        \item $n$ is a prime-power.
        \item $n=p^{\alpha}q^{\beta}$, where $p$ and $q$ are odd primes and $\alpha,\beta$ are positive integers.
        \item $G$ is nilpotent.
    \end{itemize}
    Then $\gamma_\ell(G)$ is finite of $(m,\ell,r)$-bounded order and there is a subgroup $M\le G$ of $(m,\ell,r)$-bounded index such that $\gamma_\ell(M)=1$.
\end{theorem}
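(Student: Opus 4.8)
The plan is to peel off an unconditional structural reduction, then do the arithmetic that forces the quantitative bounds, and finally deduce the subgroup $M$ from a bound on $\gamma_\ell(G)$.

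\emph{An unconditional reduction.} Let $D$ be the set of commutators of length $\ell$ of maximal order $n$, so $|D|\le m$, and put $K=\langle D\rangle$. Since $[x_1,\dots,x_\ell]^g=[x_1^g,\dots,x_\ell^g]$, the set $D$ is conjugation-invariant, hence $K\trianglelefteq G$ and $d^G\subseteq D$ for each $d\in D$; therefore $[G:C_G(d)]\le m$, and $C:=C_G(K)=\bigcap_{d\in D}C_G(d)$ has $m$-bounded index in $G$. As $K$ is $m$-generated and $K\cap C=Z(K)$, the quotient $K/Z(K)$ embeds in $G/C$ and is $m$-bounded; by Schur's theorem $|K'|$ is $m$-bounded, while $K/K'$, being a finitely generated abelian group generated by the images of $D$ (each of finite order), is finite. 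Thus $K$ is finite of $(m,n)$-bounded order, and every commutator of length $\ell$ of order $n$ lies in this bounded normal subgroup; I would record the analogous conclusion inside $C$ and inside finite-index subgroups so as to pass freely to sections.

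\emph{The arithmetic core.} The main point is that $n$ itself is $(m,\ell,r)$-bounded. Fix $c=[x_1,\dots,x_\ell]$ with $o(c)=n$; using the identity $[x_1^k,x_2,\dots,x_\ell]\equiv[x_1,\dots,x_\ell]^k\pmod{\gamma_{\ell+1}(G)}$, one sees that in $\bar G=G/\gamma_{\ell+1}(G)$ every power $\bar c^{\,k}$ with $\gcd(k,n)=1$ is again a commutator of length $\ell$, and that these give $\varphi(\bar n)$ distinct commutators of order $\bar n:=o(\bar c)$. If one arranges $\bar n=n$ — this is where residual finiteness enters: one passes to $G/R$ for a suitable finite-index (ultimately, the finite residual, which must be shown to be trivial under the hypotheses) normal subgroup $R$ chosen so that the order of $c$ is detected modulo $\gamma_{\ell+1}$ — then these $\varphi(n)$ commutators are images of elements of $D$, whence $\varphi(n)\le m$ and $n$ is $m$-bounded. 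Carrying this out, together with the analogous descent through $\gamma_{\ell+1}(G),\gamma_{\ell+2}(G),\dots$ one prime at a time, is exactly where the hypothesis on $n$ is used (a prime power, or two odd prime-power factors), the prime $2$ and integers with three or more prime divisors being the cases where the book-keeping of maximal-order commutators and the underlying Cocke--Venkataraman-type input \cite{CoVe19} break down; the nilpotent case is the alternative, in which products of commutators have controlled orders and one descends on $n$ directly. With $n$ bounded, Theorem~\ref{thm: main 1} (after reduction to the residually finite case, automatic when $G$ is finitely generated nilpotent) gives that $\gamma_\ell(G)$ is finite, and its order is bounded by combining: $\gamma_\ell(G)/\gamma_{\ell+1}(G)$ is abelian, generated by the $(r,\ell)$-boundedly many basic commutators of weight $\ell$, with $(m,\ell,r)$-bounded exponent; $\gamma_\ell(G)$ is finitely generated modulo the bounded-index centraliser $C$; and the solution of the restricted Burnside problem then bounds $|\gamma_\ell(G)|$.

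\emph{Producing $M$.} Once $\gamma_\ell(G)$ is finite of $(m,\ell,r)$-bounded order, put $N=C_G(\gamma_\ell(G))$; then $[G:N]$ is $(m,\ell,r)$-bounded (as $G/N\hookrightarrow\operatorname{Aut}\gamma_\ell(G)$), $N$ is finitely generated, and $\gamma_\ell(N)\le\gamma_\ell(G)\cap Z(N)$, so $N$ is nilpotent of class at most $\ell$ with $\gamma_\ell(N)$ central of $(m,\ell,r)$-bounded exponent $e$. In a nilpotent group of class $\le\ell$ the left-normed weight-$\ell$ commutator map is multilinear modulo $\gamma_{\ell+1}=1$, hence for $u_i\in N^e$ the value $[u_1,\dots,u_\ell]$ is an $e^\ell$-th power in $\gamma_\ell(N)$, so trivial; thus $M:=N^e$ satisfies $\gamma_\ell(M)=1$, while $[N:N^e]$ is $(m,\ell,r)$-bounded since $N$ is nilpotent of bounded class, generated by boundedly many elements, with $N/N^e$ of exponent dividing $e$. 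Therefore $[G:M]$ is $(m,\ell,r)$-bounded, as required.

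\emph{The main obstacle.} I expect the real difficulty to lie entirely in the arithmetic core: making the passage to $G/\gamma_{\ell+1}(G)$ (and deeper terms) actually detect the maximal order, handling groups not a priori residually finite, and identifying precisely which $n$ render the maximal-order count usable — it is this that forces the list of hypotheses and explains why $\ell\ge3$ is harder than the case $\ell=2$ of \cite{lmst}, since one must control the whole tail $\gamma_{\ell+1}(G),\gamma_{\ell+2}(G),\dots$ rather than only the derived subgroup.
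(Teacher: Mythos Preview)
Your plan inverts the logical order of the paper and, in doing so, runs into a circularity you have not broken. You want to (a) bound $n$ by an Euler-$\varphi$ argument, (b) invoke Theorem~\ref{thm: main 1} to get $\gamma_\ell(G)$ finite, and (c) extract $M$ from that. But both (a) and (b) need $G$ (or at least $\langle x_1,\dots,x_\ell\rangle$) to be residually finite: your ``arrange $\bar n=n$'' step explicitly relies on a finite-index $R$ detecting the order of $c$, and Theorem~\ref{thm: main 1} is stated only for residually finite groups. For the first two bullet points of Theorem~\ref{thm: main} the group $G$ is an arbitrary $r$-generator group, and you never establish residual finiteness. The paper resolves this in the opposite direction: it first produces $M$ of $(m,n,\ell,r)$-bounded index with $\gamma_\ell(M)=1$, observes that $G$ is then virtually nilpotent and hence residually finite, and \emph{only then} bounds $n$ via Proposition~\ref{prop: bounded order} to remove the dependence on $n$.

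The missing idea that allows the paper to get $M$ first, without residual finiteness, is the notion of \emph{$d$-stability} and the combinatorial machinery of Section~4. One introduces the maps $\varphi_I$ for $I\subseteq\{1,\dots,\ell\}$, orders the subsets lexicographically, and proves by induction along this order (Lemmas~\ref{lem: giochetto 1} and~\ref{lem: giochetto 2}, Theorem~\ref{thm: main(i)}) that one can pass to a bounded-index normal subgroup on which all the $\varphi_I$ are trivial, hence $\gamma_\ell(M)=1$. The arithmetic hypotheses on $n$ (prime power, or $p^\alpha q^\beta$ with $p,q$ odd, or $G$ nilpotent) are used \emph{solely} to verify $d$-stability (Proposition~\ref{prop: D-stable}); they are not used to ``descend through $\gamma_{\ell+1}(G),\gamma_{\ell+2}(G),\dots$ one prime at a time'' as you suggest, and your description of where the two-odd-primes condition enters does not correspond to an actual argument. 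Your final step (producing $M$ from a bound on $|\gamma_\ell(G)|$ via $N=C_G(\gamma_\ell(G))$ and $M=N^e$) is correct in isolation, but it is not reached because the preceding steps do not go through.
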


In the next section, we collect some useful lemmas. Proofs of Theorem \ref{thm: main 1} and Theorem \ref{thm: main} are given in Sections 3 and 4, respectively.

\section{Preliminaries}

Throughout the paper, we use standard notation. In particular, $[G,H]$ is the subgroup generated by commutators $[g,h]$, where $g\in G$ and $h\in H$. We write $x^G$ for the conjugacy class containing $x\in G$ and $\langle x\rangle^G$ for the subgroup generated by $x^G$.
Similarly, given a subset $X$ of a group $G$, we write $\langle X\rangle$ for the subgroup generated by $X$, while the symbol $\langle X\rangle^G$ denotes the normal subgroup generated by $X$.

We will freely use the following well-known commutator identities (see for instance \cite[5.1.5]{Ro96}).

\begin{lemma}
    \label{lem: commutators basic}
    Let $x,y,z$ be elements of a group. Then:
    \begin{enumerate}[(i)]
        \item $[x,y]=[y,x]^{-1}$.
        \item $[xy,z]=[x,z]^y[y,z]$ and $[x,yz]=[x,z][x,y]^z$.
        \item $[x,y^{-1}]=[y,x]^{y^{-1}}$ and 
        $[x^{-1},y]=[y,x]^{x^{-1}}$.
    \end{enumerate}
\end{lemma}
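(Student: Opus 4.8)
These are the standard commutator identities, and the plan is simply to verify all three by direct substitution of the definitions $[a,b]=a^{-1}b^{-1}ab$ and $a^{b}=b^{-1}ab$; no structural hypothesis on the group is needed, and the whole argument reduces to a sequence of cancellations (equivalently, an identity check in the free group on $x,y,z$).

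First I would dispose of (i) immediately: $[y,x]^{-1}=(y^{-1}x^{-1}yx)^{-1}=x^{-1}y^{-1}xy=[x,y]$. Then I would treat (ii), which is the one computation that carries the rest. For the first equation, $[x,z]^{y}[y,z]=\bigl(y^{-1}x^{-1}z^{-1}xz\,y\bigr)\bigl(y^{-1}z^{-1}yz\bigr)=y^{-1}x^{-1}z^{-1}xyz=(xy)^{-1}z^{-1}(xy)z=[xy,z]$, the key cancellation being $zy\cdot y^{-1}z^{-1}=1$ in the middle. The second equation is symmetric: $[x,z][x,y]^{z}=\bigl(x^{-1}z^{-1}xz\bigr)\bigl(z^{-1}x^{-1}y^{-1}xy\,z\bigr)=x^{-1}z^{-1}y^{-1}xyz=x^{-1}(yz)^{-1}x(yz)=[x,yz]$.

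Finally, (iii) follows from (i) and (ii) with no further computation. Applying the first identity in (ii) to the trivial commutator $1=[xx^{-1},y]$ gives $1=[x,y]^{x^{-1}}[x^{-1},y]$, whence $[x^{-1},y]=\bigl([x,y]^{x^{-1}}\bigr)^{-1}=\bigl([x,y]^{-1}\bigr)^{x^{-1}}=[y,x]^{x^{-1}}$ by (i); similarly, applying the second identity in (ii) to $1=[x,yy^{-1}]$ yields $1=[x,y^{-1}][x,y]^{y^{-1}}$, hence $[x,y^{-1}]=\bigl([x,y]^{-1}\bigr)^{y^{-1}}=[y,x]^{y^{-1}}$. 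I do not expect any genuine obstacle here; the only point requiring attention is fixing the conjugation convention $a^{b}=b^{-1}ab$ once and for all, since the opposite choice would invert the exponents in (ii) and spoil the cancellations.
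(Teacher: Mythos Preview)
Your verification is correct in every detail; the cancellations in (ii) go through exactly as you say, and the derivation of (iii) from (i) and (ii) via $[xx^{-1},y]=1$ and $[x,yy^{-1}]=1$ is clean. The paper itself does not give a proof of this lemma at all: it simply records the identities as well known and cites \cite[5.1.5]{Ro96}, so your direct expansion is precisely the standard argument any reference would supply.
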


As we are concerned with commutators of length $\ell$, the following generalisation of Lemma \ref{lem: commutators basic}(ii) will be extremely useful.

\begin{lemma}
    \label{lem: iker marta 2}
    Let $G$ be a group and let $k\in\{1,\ldots,\ell\}$.
    Take $x_1,\ldots, x_{k-1}, y, x_{k+1}, \ldots, x_\ell \in G$.
    Then there exist $y_{k+1},\ldots,y_{\ell+1}\in \langle y \rangle ^G$ such that
    \begin{align*}
        [x_1,&\ldots, x_{k-1},xy,x_{k+1},\ldots,x_\ell]\\
        &=
        [x_1,\ldots,x_{k-1},x,x_{k+1}^{y_{k+1}},\ldots,x_\ell^{y_\ell}]^{y_{\ell+1}}
        [x_1,\ldots,x_{k-1},y,x_{k+1},\ldots,x_\ell]
    \end{align*}
    for every $x \in G$.
\end{lemma}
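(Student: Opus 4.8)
The plan is to prove this by induction on $\ell - k$, i.e. on the number of entries appearing to the right of the "split" slot $k$. First I would establish the base case $k = \ell$: here the commutator $[x_1,\ldots,x_{\ell-1},xy]$ has the split in the last coordinate, and Lemma \ref{lem: commutators basic}(ii) applied to the outermost bracket gives
\[
[x_1,\ldots,x_{\ell-1},xy] = [x_1,\ldots,x_{\ell-1},x]^{y}\,[x_1,\ldots,x_{\ell-1},y],
\]
which is exactly the claimed formula with $y_{\ell+1} = y \in \langle y\rangle^G$ (and no intermediate $y_i$'s needed). This is the only place the basic identity is used directly; everything else is bookkeeping.

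For the inductive step, suppose the statement holds whenever the split occurs in coordinate $k+1$ or later, and consider a split in coordinate $k \le \ell-1$. The key observation is that
\[
[x_1,\ldots,x_{k-1},xy,x_{k+1},\ldots,x_\ell] = \bigl[\, [x_1,\ldots,x_{k-1},xy],\, x_{k+1},\ldots,x_\ell\,\bigr],
\]
so by the base-case identity the inner bracket $[x_1,\ldots,x_{k-1},xy]$ equals $u^{y}\,v$, where $u = [x_1,\ldots,x_{k-1},x]$ and $v = [x_1,\ldots,x_{k-1},y] \in \langle y\rangle^G$. Substituting, the whole expression becomes $[\,u^{y} v,\, x_{k+1},\ldots,x_\ell\,]$, which is a commutator of length $\ell - k + 1$ whose first entry is split as a product. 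Now I apply the inductive hypothesis to this shorter commutator, with the split in its \emph{first} coordinate: that produces
\[
[\,u^{y} v,\, x_{k+1},\ldots,x_\ell\,] = [\,u^{y},\, x_{k+1}^{z_{k+1}},\ldots,x_\ell^{z_\ell}\,]^{z_{\ell+1}}\,[\,v,\, x_{k+1},\ldots,x_\ell\,]
\]
for suitable $z_i \in \langle v\rangle^G \subseteq \langle y\rangle^G$. The second factor is precisely $[x_1,\ldots,x_{k-1},y,x_{k+1},\ldots,x_\ell]$, as required. For the first factor, I absorb the conjugation: $[\,u^{y},\, x_{k+1}^{z_{k+1}},\ldots\,]^{z_{\ell+1}} = [\,u,\, x_{k+1}^{z_{k+1}y^{-1}},\ldots, x_\ell^{z_\ell y^{-1}}\,]^{y z_{\ell+1}}$, using that conjugating every entry of a commutator by $g$ conjugates the whole commutator by $g$. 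Setting $y_{k+i} = z_{k+i}\, y^{-1}$ for $1 \le i \le \ell-k$ and $y_{\ell+1} = y\, z_{\ell+1}$, all of which lie in $\langle y\rangle^G$ since $z_i, y \in \langle y\rangle^G$ and $\langle y\rangle^G$ is normal, yields the stated identity with $x = $ the given element. Since $x$ was arbitrary throughout, the "for every $x \in G$" clause is automatic.

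The main thing to be careful about — the only real obstacle — is the consistent tracking of which group the conjugating elements belong to, and making sure the induction is set up so that the hypothesis is invoked with the split in the \emph{first} slot (which is exactly the form the base case hands us). One should also double-check the conjugation-absorption identity $[a^g, b_1, \ldots, b_t] = [a, b_1^{g^{-1}}, \ldots, b_t^{g^{-1}}]^{g}$, which follows by a trivial induction from $[a,b]^g = [a^g, b^g]$; together with the fact that $\langle y\rangle^G$ is closed under conjugation and contains $[x_1,\ldots,x_{k-1},y]$ (since that element is a product of conjugates of $y^{\pm 1}$ by parts (ii)–(iii) of Lemma \ref{lem: commutators basic}), this closes every gap. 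No deep idea is needed; the lemma is a structural identity and the proof is a disciplined induction.
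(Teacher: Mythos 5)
Your approach is genuinely different from the paper's: the paper performs the swap
$[u,y][u,x]^y=[u,x]^{y'}[u,y]$ with $y'=y[u,y]^{-1}$ and then simply cites \cite[Lemma~2.8]{HeMo21} to push the product through the remaining entries, whereas you attempt a self‑contained induction. That is a reasonable thing to try, but your argument has two problems, one small and one fatal.

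The small one: your base‑case identity is written in the wrong order. Lemma~\ref{lem: commutators basic}(ii) gives
$[u,xy]=[u,y][u,x]^y$ (with $u=[x_1,\ldots,x_{\ell-1}]$), not $[u,x]^{y}[u,y]$. You therefore still need the conjugation swap
$[u,y][u,x]^y=[u,x]^{y\,[u,y]^{-1}}[u,y]$, exactly as in the paper, to land on the claimed form with $y_{\ell+1}=y[u,y]^{-1}\in\langle y\rangle^G$. This is easily repaired and propagates harmlessly (replace $y$ by $y'=y[u,y]^{-1}$ in your conjugation‑absorption step), but as written the base case is false.

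The fatal one is the induction scheme. You declare the induction to be on $\ell-k$, the number of slots to the right of the split, with inductive hypothesis ``the statement holds whenever the split occurs in coordinate $k+1$ or later.'' Your reduction collapses the first $k$ slots into the single element $u^{y'}v$ and produces the commutator $[u^{y'}v,\,x_{k+1},\ldots,x_\ell]$, of length $\ell-k+1$, to which you then want to apply the inductive hypothesis \emph{with the split in the first coordinate}. But that shorter commutator still has exactly $\ell-k$ entries to the right of its split, so the quantity you are inducting on has not decreased, and a split in position $1$ is in any case not ``coordinate $k+1$ or later'': the hypothesis you wrote down never applies to the instance you invoke it on, so the induction is circular. (Reading it instead as induction on the total length $\ell$ does not save it either: for $k=1$ your reduction yields a commutator of the same length $\ell$ with the split again in slot $1$.)

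To make a from‑scratch proof close, peel from the \emph{right} rather than the left: with $t=\ell-k$, apply the inductive hypothesis (for $t-1$) to $w=[x_1,\ldots,x_{k-1},xy,x_{k+1},\ldots,x_{\ell-1}]$, obtaining $w=a^{y_\ell}b$ with $a$ of the desired form and $b=[x_1,\ldots,x_{k-1},y,x_{k+1},\ldots,x_{\ell-1}]\in\langle y\rangle^G$; then expand $[a^{y_\ell}b,x_\ell]=[a^{y_\ell},x_\ell]^b[b,x_\ell]=[a,x_\ell^{y_\ell^{-1}}]^{y_\ell b}[b,x_\ell]$ and read off the new conjugators. That version genuinely decreases $t$ and produces the $y_i$'s independently of $x$. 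The paper sidesteps all of this by delegating precisely this right‑peeling induction to \cite[Lemma~2.8]{HeMo21}.
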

\begin{proof} Choose $x\in G$ and 
    observe that
    \begin{align*}
        [x_1,&\ldots, x_{k-1},xy,x_{k+1},\ldots,x_\ell]\\
        &=
        [[x_1,\ldots,x_{k-1},y][x_1,\ldots,x_{k-1},x]^y,x_{k+1},\ldots,x_\ell],\\
        &=
        [[x_1,\ldots,x_{k-1},x]^{y'}[x_1,\ldots,x_{k-1},y],x_{k+1},\ldots,x_\ell],
    \end{align*}
    where $y'=y[x_1,\ldots,x_{k-1},y]^{-1}\in\langle y\rangle^G$.
    Now, by \cite[Lemma 2.8]{HeMo21}, there exist $y_{k+1}$, $\ldots$, $y_{\ell+1}\in\langle[x_1,\ldots,x_{k-1},y]\rangle^G\le\langle y\rangle^G$ such that
    \begin{align*}
        [[x_1,\ldots,x_{k-1}&,x]^{y'}
        [x_1,\ldots,x_{k-1},y],x_{k+1},\ldots,x_\ell]\\
        &=
        [x_1,\ldots,x_{k-1},x,x_{k+1}^{y_{k+1}},\ldots,x_\ell^{y_\ell}]^{y_{\ell+1}}
        [x_1,\ldots,x_{k-1},y,x_{k+1},\ldots,x_\ell].
    \end{align*}
    The lemma follows.
\end{proof}

In what follows we write $Z_i(G)$ to denote the $i$-th term of the upper central series of a group $G$. Recall that an $FC$-group is a group in which every element is contained in a finite conjugacy class. We will require the following classical results, which we collect in a single lemma for the reader's convenience.

\begin{lemma}
    \label{lem: basic results}
    Let $G$ be a group. Then:
    \begin{enumerate}[(i)]
        \item $G$ is a finitely generated $FC$-group if and only if it is a finite extension of its center (see \cite{Neu51}).
    
        \item Every finite normal subset $S$ of $G$ consisting of elements of finite order generates a finite normal subgroup $\langle S\rangle$ of $G$ (Dicman's Lemma).
        Moreover, the order of $\langle S\rangle$ is $(|S|,n)$-bounded, where $n$ is the maximum among the orders of the elements in $S$ (see the last line of the proof of Dicman's Lemma in \mbox{\cite[page~442]{Ro96}}).

        \item For $k\ge 1$, if $G/Z_{k-1}(G)$ is finite, then $\gamma_{k}(G)$ is finite of $|G/Z_{k-1}(G)|$-bounded order (see \cite{Ba52,KuSu13}).
    \end{enumerate}
\end{lemma}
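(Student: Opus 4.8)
All three statements are classical; my plan is not to reprove them but to recall the standard arguments and to indicate where the quantitative claims come from. For \emph{(i)} only the forward direction needs work: writing $G=\langle g_1,\dots,g_r\rangle$, each $C_G(g_i)$ has finite index because the class $g_i^G$ is finite, and $Z(G)=\bigcap_{i=1}^r C_G(g_i)$ is a finite intersection of finite-index subgroups, hence of finite index; the converse is the trivial observation that $[G:Z(G)]=n$ forces $|x^G|=[G:C_G(x)]\le n$ for every $x$, so $G$ is $FC$. For \emph{(ii)}, normality of $\langle S\rangle$ is clear since $S$ is conjugation-invariant, and for finiteness I would pass to $S\cup S^{-1}$ to assume $S=S^{-1}$ and then argue on words over $S$: the identity $ss'=(ss's^{-1})s$ with $ss's^{-1}\in S$ lets one move every occurrence of a fixed $s_0\in S$ to the right end of a word and amalgamate them into a single power of $s_0$, which may then be reduced modulo the order of $s_0$ (at most $n$); iterating over the $|S|$ elements of $S$ bounds the length of a shortest word for any element of $\langle S\rangle$, and hence $|\langle S\rangle|$, in terms of $|S|$ and $n$ — this is precisely the estimate obtained at the end of the proof in \cite{Ro96}.

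For \emph{(iii)} I would induct on $k$. The case $k=1$ is trivial ($\gamma_1(G)=G=G/Z_0(G)$), and $k=2$ is Schur's theorem, whose proof runs as follows: $G$ has at most $[G:Z(G)]^2$ commutators (each depending only on the cosets of its two entries), these form a finite normal subset of $G$, they have $[G:Z(G)]$-bounded order, and then \emph{(ii)} bounds $|\gamma_2(G)|$. For general $k$ the same scheme applies: since $[\gamma_k(G),Z_{k-1}(G)]\le[\gamma_{k-1}(G),Z_{k-1}(G)]\le Z_0(G)=1$, the subgroup $Z_{k-1}(G)$ centralises $\gamma_k(G)$, and combining this with Lemma \ref{lem: iker marta 2} one checks that a length-$k$ commutator depends only on the cosets of its entries modulo $Z_{k-1}(G)$; hence there are at most $|G/Z_{k-1}(G)|^k$ of them, they again form a finite normal subset of $G$, and it remains to bound their orders in terms of $|G/Z_{k-1}(G)|$ and apply \emph{(ii)}.

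The hard part is purely quantitative and lies in that last step of \emph{(iii)}: bounding the orders of the (finitely many) length-$k$ commutators in terms of $|G/Z_{k-1}(G)|$, so that Dicman's Lemma returns an explicit bound on $|\gamma_k(G)|$. This is the classical content of \cite{Ba52,KuSu13}, which I would simply cite; the existence statements, by contrast, fall out at once from Schur's theorem, Dicman's Lemma, and the routine induction sketched above.
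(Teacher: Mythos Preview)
The paper does not prove this lemma: it is presented as a compendium of classical facts, each accompanied only by a citation, with no argument supplied. Your proposal therefore goes beyond what the paper does, and the sketches you give for (i) and (ii) are the standard ones found in the cited sources; the reduction in (iii) to counting length-$k$ commutators via Lemma~\ref{lem: iker marta 2} and the identity $[\gamma_{k-1}(G),Z_{k-1}(G)]=1$ is also correct.

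One point in (iii) is slightly overstated. You separate the ``existence'' assertion (that $\gamma_k(G)$ is finite) from the quantitative bound and say the former ``falls out at once'' from Dicman's Lemma once you know there are only finitely many length-$k$ commutators. But part (ii) requires the elements of the finite normal set to have \emph{finite order}, and your counting argument does not supply that: knowing there are at most $|G/Z_{k-1}(G)|^{k}$ length-$k$ commutators says nothing about their orders. One standard way to close this gap is to first use the inductive hypothesis and Schur to see that $\gamma_{k-1}(G)'$ is finite, pass to $G/\gamma_{k-1}(G)'$, where $\gamma_{k-1}$ becomes abelian and $[c,g]^m=[c^m,g]$, and deduce finite order of the generators from the fact that $\gamma_{k-1}(G)/(\gamma_{k-1}(G)\cap Z(G))$ is finite. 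This is precisely the sort of manoeuvre carried out in \cite{Ba52}, so your decision to defer to that reference is correct; just be aware that the qualitative and quantitative parts of (iii) are not as cleanly separable as your final sentence suggests.
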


We will also need the following result from  \cite{GuSh15}.

\begin{lemma}
    \label{gush}
    Let $G$ be a finite group and $x\in G$ a commutator of length $\ell$.
    Then every generator of the cyclic subgroup $\langle x\rangle$ is a commutator of length $\ell$.
\end{lemma}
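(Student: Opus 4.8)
The plan is to reduce the lemma to a character-theoretic statement about the commutator word. For $j\ge 1$ set
\[
N_j(g)=\bigl|\{(y_1,\dots,y_j)\in G^j:[y_1,\dots,y_j]=g\}\bigr|.
\]
Each $N_j$ is a class function with values in $\mathbb{Z}_{\ge 0}$, and $g$ is a commutator of length $j$ if and only if $N_j(g)>0$. I would prove, by induction on $j$, that when $N_j=\sum_{\chi\in\operatorname{Irr}(G)}c_\chi^{(j)}\chi$ is expanded in the basis of irreducible characters, every coefficient $c_\chi^{(j)}$ is \emph{rational}. Granting this for $j=\ell$, the lemma follows at once: let $x$ be a commutator of length $\ell$ and let $k$ be coprime to $|x|$; by the Chinese Remainder Theorem choose $k'$ with $k'\equiv k\pmod{|x|}$ and $\gcd(k',|G|)=1$, and let $\sigma$ be the automorphism $\zeta\mapsto\zeta^{k'}$ of $\mathbb{Q}(\zeta_{|G|})$. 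Since $\sigma(\chi(g))=\chi(g^{k'})$ for every $\chi\in\operatorname{Irr}(G)$ and every $g\in G$ (the eigenvalues of $g^{k'}$ in any representation are the $k'$-th powers of those of $g$), while the rational numbers $c_\chi^{(\ell)}$ and the integer $N_\ell(x)$ are fixed by $\sigma$, we get
\[
N_\ell(x^{k})=N_\ell(x^{k'})=\sum_\chi c_\chi^{(\ell)}\chi(x^{k'})=\sigma\Bigl(\sum_\chi c_\chi^{(\ell)}\chi(x)\Bigr)=\sigma\bigl(N_\ell(x)\bigr)=N_\ell(x)>0.
\]
Hence $x^{k}$ is a commutator of length $\ell$; letting $k$ run over all residues coprime to $|x|$ produces every generator of $\langle x\rangle$.

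It remains to establish the rationality of the $c_\chi^{(j)}$. The base case $j=1$ is trivial, since $N_1\equiv 1$ is the trivial character. For the inductive step I would use the splitting $[y_1,\dots,y_j]=\bigl[[y_1,\dots,y_{j-1}],y_j\bigr]$ of a left-normed commutator, which yields the recursion
\[
N_j(g)=\sum_{h\in G}N_{j-1}(h)\cdot\bigl|\{t\in G:[h,t]=g\}\bigr|.
\]
For $\chi\in\operatorname{Irr}(G)$ afforded by a representation $\rho$, Schur's lemma gives $\sum_{t\in G}\rho(t)^{-1}\rho(h)\rho(t)=\tfrac{|G|}{\chi(1)}\chi(h)I$, hence $\sum_{t\in G}\chi(h^{-1}t^{-1}ht)=\tfrac{|G|}{\chi(1)}|\chi(h)|^{2}$, and therefore $\sum_{g\in G}\bigl|\{t:[h,t]=g\}\bigr|\,\overline{\chi(g)}=\tfrac{|G|}{\chi(1)}|\chi(h)|^{2}$. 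Substituting this into the recursion and using orthogonality of characters, the coefficient of $\chi$ in $N_j$ comes out as
\[
c_\chi^{(j)}=\frac{|G|}{\chi(1)}\sum_{\psi\in\operatorname{Irr}(G)}c_\psi^{(j-1)}\,\langle\psi\chi,\chi\rangle,
\]
where $\langle\psi\chi,\chi\rangle$, the multiplicity of $\chi$ in $\psi\otimes\chi$, is a non-negative integer. As $|G|$ and $\chi(1)$ are integers and the $c_\psi^{(j-1)}$ are rational by the inductive hypothesis, $c_\chi^{(j)}$ is rational. (For $j=2$ this recovers the classical Frobenius formula $N_2=\sum_\chi\tfrac{|G|}{\chi(1)}\chi$.)

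The only genuine work is in the inductive step: one must carry the character expansion of $N_{j-1}$ through the Fourier inversion and verify that the a priori non-rational quantities $|\chi(h)|^{2}$ reassemble, once summed against $N_{j-1}$, into the integral multiplicities $\langle\psi\chi,\chi\rangle$ — this is exactly where the structure of the left-normed commutator, as opposed to an arbitrary word, is used. A minor technical point is the passage from $\gcd(k,|x|)=1$ to $\gcd(k',|G|)=1$, needed so that $\sigma$ is defined on $\mathbb{Q}(\zeta_{|G|})$, but this is routine. One could instead attempt a character-free proof, reducing to $G=\langle y_1,\dots,y_\ell\rangle$ and inducting on $|G|$ with the aid of commutator identities such as Lemma~\ref{lem: iker marta 2}; however, keeping control of the last entry of the commutator throughout such a reduction appears to cost about as much, so I would favour the character-theoretic route above.
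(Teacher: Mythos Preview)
Your proof is correct. The rationality of the coefficients $c_\chi^{(j)}$ follows exactly as you indicate: the Schur-averaging identity $\sum_{t}\chi(h^{-1}t^{-1}ht)=\tfrac{|G|}{\chi(1)}|\chi(h)|^{2}$ converts the recursion into
\[
c_\chi^{(j)}=\langle N_j,\chi\rangle=\frac{1}{\chi(1)}\sum_{h}N_{j-1}(h)\,|\chi(h)|^{2}
=\frac{|G|}{\chi(1)}\sum_{\psi}c_\psi^{(j-1)}\langle\psi\chi,\chi\rangle,
\]
and the multiplicities $\langle\psi\chi,\chi\rangle$ are integers, so the induction goes through; the Galois step is then standard.

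As for comparison with the paper: the paper does not prove this lemma at all. It is quoted verbatim from Guralnick--Shumyatsky \cite{GuSh15}, where the general statement is that the left-normed commutator word $\gamma_\ell$ is \emph{rational} (the set of $\gamma_\ell$-values in any finite group is closed under taking generators of cyclic subgroups). Your character-theoretic argument is precisely the classical route to such rationality results and is, in essence, the same mechanism used there; you have supplied a self-contained proof where the paper is content to cite. The alternative ``character-free'' strategy you mention at the end is not the one taken in \cite{GuSh15} either, and would indeed be harder to push through for arbitrary~$\ell$.
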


\section{Proof of Theorem \ref{thm: main 1}}

For the rest of the paper, the set of all commutators of length $\ell$ that have maximal order among all commutators of length $\ell$ of a group $G$ will be denoted by $\mathcal{D}_\ell$ (the group $G$ will be clear from the context), while the subgroup generated by $\mathcal{D}_\ell$ will be denoted by $D$.
We will say that an element $g\in G$ is $k$-related to $\mathcal{D}_\ell$, for a non-negative integer $k$, if there exist $g_1,\ldots,g_k\in G$ such that $[g,g_1,\ldots,g_k]\in\mathcal{D}_\ell$.

\begin{proposition}
    \label{prop: infinitely many}
    If a group $G$ contains commutators of length $\ell$ of infinite order, then there are infinitely many such commutators.    
\end{proposition}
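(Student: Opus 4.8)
The plan is to argue by contradiction: suppose $G$ has a commutator $c=[x_1,\ldots,x_\ell]$ of infinite order but only finitely many commutators of length $\ell$ of infinite order. The key idea is that once we have a single infinite-order length-$\ell$ commutator, we can "perturb" one of its entries to manufacture infinitely many distinct ones, using Lemma \ref{lem: iker marta 2} to keep track of the resulting product structure. The natural entry to perturb is the first one, since $[x_1 t, x_2,\ldots,x_\ell]=[x_1,x_2,\ldots,x_\ell]^{t'}\cdot(\text{stuff})$ via the commutator identities, but in fact it is cleaner to perturb the \emph{last} entry $x_\ell$, because $[x_1,\ldots,x_{\ell-1},x_\ell t]=[x_1,\ldots,x_{\ell-1},t]\,[x_1,\ldots,x_{\ell-1},x_\ell]^t$, i.e.\ we get $c^t$ times a correction term that is again a length-$\ell$ commutator.

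First I would record the following dichotomy, applied with $w=[x_1,\ldots,x_{\ell-1}]$, so that $c=[w,x_\ell]$ has infinite order. For each $g\in G$, the element $[w,x_\ell g]=[w,g]\,[w,x_\ell]^g=[w,g]\,c^g$ is a commutator of length $\ell$; likewise $[w,g]$ itself is one. If for some $g$ the commutator $[w,g]$ has infinite order we may as well have started with it, so assume every $[w,g]$ has finite order. Now consider the family $\{[w,x_\ell^k]:k\in\mathbb{Z}\}$ of length-$\ell$ commutators. Using Lemma \ref{lem: commutators basic}(ii) repeatedly, $[w,x_\ell^{k}]=[w,x_\ell^{k-1}]^{x_\ell}[w,x_\ell]$, which shows by induction that $[w,x_\ell^k]$ and $c^k=[w,x_\ell]^k$ differ by a product of conjugates of the finite-order elements $[w,x_\ell^j]$, $j<k$. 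The decisive point is to show that infinitely many of the $[w,x_\ell^k]$ are distinct; since $c$ has infinite order, if only finitely many length-$\ell$ commutators had infinite order, the infinitely many elements $[w,x_\ell^k]$ would all lie in a finite set together with finitely many infinite-order elements — so infinitely many coincide, forcing $[w,x_\ell^{k_1}]=[w,x_\ell^{k_2}]$ for distinct $k_1,k_2$, hence $[w,x_\ell^{k_2-k_1}]$ lies in a bounded region and one extracts a contradiction with $c$ having infinite order.

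To make the last step rigorous, the cleanest route is: let $S$ be the (finite, by assumption) set of all length-$\ell$ commutators of $G$ that have infinite order, and let $T$ be the set of length-$\ell$ commutators of finite order. If $T$ is finite as well then $G$ has finitely many commutators of length $\ell$ altogether, and then \emph{every} element $[w,g]$, in particular every power $[w,x_\ell]^k$ arising as a length-$\ell$ commutator... — but $[w,x_\ell]^k$ need not itself be a length-$\ell$ commutator, so instead I would work with $N=\langle [w,g] : g\in G\rangle = [\,\langle w\rangle^{G},\langle w\rangle^{G}\,]$-type considerations, or more simply with the normal closure: the finitely many finite-order length-$\ell$ commutators generate, via Dicman's Lemma (Lemma \ref{lem: basic results}(ii)), a finite normal subgroup only if the relevant generating set is normal, which it is not a priori. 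The honest main obstacle is therefore exactly this: extracting from "finitely many infinite-order length-$\ell$ commutators" a genuine contradiction, and I expect the intended argument is to fix $w=[x_1,\ldots,x_{\ell-1}]$, note that $g\mapsto [w,g]$ maps $G$ into the set of length-$\ell$ commutators, observe $[w,x_\ell g]=[w,g]c^g$, and then run a pigeonhole/coset argument: the map $g\mapsto [w,g]$ restricted to $\langle x_\ell\rangle$ (or to a suitable transversal) must take infinitely many values in $S\cup(\text{finite set of their conjugates})$ unless $c$ had finite order, and the finiteness of $S$ together with the normality-up-to-bounded-error of the correction terms yields that $c$ is conjugate to $c^{-1}$-type relations that bound its order. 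I would then push this through to conclude $c$ has finite order, contradiction.

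Thus the skeleton is: (1) reduce to $c=[w,x_\ell]$ of infinite order with $w=[x_1,\ldots,x_{\ell-1}]$; (2) use $[w,x_\ell g]=[w,g]\,c^{g}$ and $[w,x_\ell^k]=[w,x_\ell^{k-1}]^{x_\ell}[w,x_\ell]$ to exhibit an infinite family of length-$\ell$ commutators indexed by $k\in\mathbb{Z}$; (3) suppose only finitely many length-$\ell$ commutators have infinite order and derive that the family $\{[w,x_\ell^k]\}$ collapses onto a finite set (of the infinite-order ones, since the finite-order ones multiply up in a controlled, Dicman-type, way once we pass to an appropriate finite normal subgroup); (4) conclude two distinct powers give the same commutator and unwind to get that $c$ has finite order, the desired contradiction. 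The step I expect to be genuinely delicate is (3)–(4): controlling the finite-order "noise" so that distinctness of the $[w,x_\ell^k]$ really does follow from $c$ having infinite order.
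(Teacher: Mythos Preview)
Your proposal has a genuine gap at exactly the point you flag as ``delicate.'' The hypothesis is only that there are finitely many length-$\ell$ commutators of \emph{infinite} order; there is no bound whatsoever on the number of finite-order ones. So when you look at the family $\{[w,x_\ell^k]:k\in\mathbb{Z}\}$, the pigeonhole you want does not exist: infinitely many of these could land among the (possibly infinite) set of finite-order length-$\ell$ commutators without any two coinciding. Dicman's Lemma is of no help here, since you have neither finiteness nor normality of the relevant set. And even if you did obtain $[w,x_\ell^{k_1}]=[w,x_\ell^{k_2}]$ for some $k_1<k_2$, unwinding gives only $[w,x_\ell^{k_1}]=[w,x_\ell^{k_2-k_1}]\,[w,x_\ell^{k_1}]^{x_\ell^{k_2-k_1}}$, which in a non-abelian setting says nothing about the order of $c$. (Also note a slip in your case split: you cannot assume ``every $[w,g]$ has finite order,'' since $c=[w,x_\ell]$ itself has infinite order.)

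The paper's argument bypasses these difficulties by first imposing structure on $D=\langle\mathcal{D}_\ell\rangle$. Under the contradiction hypothesis, $D$ is a finitely generated $FC$-group, so $D/Z(D)$ is finite and $[D,D]$ is finite by Schur; one may then pass to the quotient and assume $D$ is abelian, and further torsion-free after killing the finite torsion part. Once $D$ is torsion-free abelian, powers pull cleanly through commutators: $[d^n,x_1,\ldots,x_k]=[d,x_1,\ldots,x_k]^n$. One then studies the chain $D_i=[D,G,\overset{i}{\ldots},G]$; each $D_i$ is generated by infinite-order length-$\ell$ commutators, so the chain must stabilise, and in either the eventually-trivial or eventually-stable case the power identity manufactures infinitely many distinct infinite-order length-$\ell$ commutators. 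The reduction to $D$ torsion-free abelian is precisely the missing idea in your sketch---it is what makes ``unwinding'' possible.
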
 
\begin{proof}
    Assume for a contradiction that $G$ has only finitely many commutators of length $\ell$ of infinite order.
    Observe that in that case, the subgroup $D=\langle\mathcal{D}_{\ell}\rangle$ is a finitely generated $FC$-group, and hence the quotient $D/Z(D)$ is finite by Lemma~\ref{lem: basic results}(i). Therefore, $[D,D]$ is finite by Schur's Theorem and so we may assume that $D$ is abelian. Moreover, the torsion part of $D$ is also finite, so we can further assume that $D$ is torsion-free.

    For each $i\ge 0$, define $D_i=[D,G,\overset{i}{\ldots},G]$.
    Observe that every subgroup $D_i$ can be generated by commutators of length $\ell$ of infinite order, so in particular, if $D_i\neq D_{i+1}$ for every $i\ge 0$, there would be infinitely many such commutators.
    Therefore, there must exist $k\ge 0$ such that $D_k=D_i$ for every $i\ge k$.

    If $D_k$ is trivial, set $j=\max\{i\mid D_i\neq 1\}$ so that $D_{j+1}=1$ and $D_j$ is central in $G$.
    It follows that for every $d\in\mathcal{D}_{\ell}$ and $x_1,\ldots,x_j\in G$ we have
    $$
    [d,x_1,\ldots,x_j^n]=[d,x_1,\ldots,x_j]^n.
    $$
    Since $D$ is torsion-free, this contradicts the hypothesis that there are only finitely many commutators of length $\ell$ of infinite order.
    
    Suppose now $D_k$ is non-trivial, and assume without loss of generality that $k\ge \ell-1$.
    Let $d\in D$ and take $x_1,\ldots,x_k\in G$ in such a way that $[d,x_1,\ldots,x_k]\neq 1$.
    Since $D$ is abelian, we have 
    $$
    [d^n,x_1,\ldots,x_k]=[[d,x_1]^n,x_2,\ldots,x_k]=[[d,x_1,x_2]^n,x_3,\ldots,x_k]=\cdots=[d,x_1,\ldots,x_k]^n
    $$
    for every positive integer $n$. Keeping in mind that $k\ge \ell-1$, we deduce that $[d,x_1,\ldots,x_k]^n$ is a non-trivial commutator of length $\ell$.
    Again, since $D$ is torsion-free, we get a contradiction. This completes the proof.
\end{proof}

Hence, according to the previous proposition, if the cardinality of $\mathcal{D}_\ell$ is finite, then the order $n$ of the elements in $\mathcal{D}_\ell$ is finite too.
The following result shows that if $G$ is locally residually finite (for example, if $G$ is finite-by-nilpotent-by-finite or linear) and $|\mathcal{D}_\ell|=m<\infty$, then $n$ is $m$-bounded.

\begin{proposition} 
    \label{prop: bounded order}
    Let $G$ be a locally residually finite group such that $|\mathcal{D}_\ell|=m<\infty$.
    Then the order of the elements in $\mathcal{D}_\ell$ is $m$-bounded.
\end{proposition}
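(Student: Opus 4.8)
The plan is to show that if the order $n$ of the elements in $\mathcal{D}_\ell$ were "too large" relative to $m$, we could manufacture more than $m$ distinct commutators of length $\ell$ of order $n$, contradicting $|\mathcal{D}_\ell|=m$. Concretely, fix $d=[x_1,\dots,x_\ell]\in\mathcal{D}_\ell$, so $d$ has order $n$. The key tool is Lemma \ref{gush}: in any \emph{finite} group containing $d$ as a commutator of length $\ell$, every generator of $\langle d\rangle$ is again a commutator of length $\ell$; since these generators all have the same order as $d$, they all lie in $\mathcal{D}_\ell$ (once we know $n$ is the maximal order, which it is by hypothesis). There are $\varphi(n)$ such generators, so one would like to conclude $\varphi(n)\le m$, whence $n$ is $m$-bounded. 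The obstacle is that $G$ itself need not be finite, so Lemma \ref{gush} does not apply directly — this is exactly where local residual finiteness enters.

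First I would pass to a suitable finite quotient. Let $H=\langle x_1,\dots,x_\ell\rangle$, a finitely generated subgroup of $G$, hence residually finite by hypothesis. The element $d$ has finite order $n$ in $H$, so there is a normal subgroup $N\trianglelefteq H$ of finite index with $d\notin N$; by intersecting with the finitely many subgroups needed we may arrange that the image $\bar d$ of $d$ in the finite group $\bar H=H/N$ still has order exactly $n$ (choose $N$ avoiding $d,d^2,\dots,d^{n-1}$, which is possible since $H$ is residually finite and there are only finitely many such powers). Then $\bar d=[\bar x_1,\dots,\bar x_\ell]$ is a commutator of length $\ell$ in the finite group $\bar H$ of order $n$.

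Now apply Lemma \ref{gush} in $\bar H$: every generator $\bar d^{\,t}$ (with $\gcd(t,n)=1$) is a commutator of length $\ell$ in $\bar H$, say $\bar d^{\,t}=[\bar y_1^{(t)},\dots,\bar y_\ell^{(t)}]$ for suitable $\bar y_i^{(t)}\in\bar H$. Lift each $\bar y_i^{(t)}$ to an element $y_i^{(t)}\in H\le G$. Then $c_t:=[y_1^{(t)},\dots,y_\ell^{(t)}]$ is a commutator of length $\ell$ in $G$ whose image in $\bar H$ is $\bar d^{\,t}$, which has order $n$; hence $c_t$ has order at least $n$, and since $n$ is by assumption the maximal order of a commutator of length $\ell$ in $G$, the order of $c_t$ is exactly $n$, so $c_t\in\mathcal{D}_\ell$. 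Finally, the elements $c_t$ for distinct residues $t$ modulo $n$ (coprime to $n$) are pairwise distinct, because their images $\bar d^{\,t}$ in $\bar H$ are already pairwise distinct. This produces $\varphi(n)$ distinct elements of $\mathcal{D}_\ell$, so $\varphi(n)\le m$.

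The conclusion is then immediate: $\varphi(n)\le m$ forces $n\le f(m)$ for an explicit function $f$ (for instance $n < (m+1)^2$ suffices by the standard lower bound $\varphi(n)\ge\sqrt{n}$ for $n\ge 7$, handling small $n$ separately), so $n$ is $m$-bounded. The only genuinely delicate point is the passage to the finite quotient preserving the order of $d$; everything else is a direct application of Lemma \ref{gush} together with the maximality of $n$. I expect no further difficulty, since finite generation of $H$ and residual finiteness of finitely generated subgroups of $G$ is precisely what "locally residually finite" provides.
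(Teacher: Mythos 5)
Your proof is correct and follows the same strategy as the paper: pass to a finite quotient $H/N$ of the finitely generated subgroup $H=\langle x_1,\ldots,x_\ell\rangle$ chosen so that $\langle d\rangle\cap N=1$ (hence the image of $d$ still has order $n$), apply Lemma \ref{gush} in that finite quotient, and lift back to $G$ to obtain $\varphi(n)$ distinct elements of $\mathcal{D}_\ell$, giving $\varphi(n)\le m$. The one small omission is that you take the finiteness of $n$ as given; this requires invoking Proposition \ref{prop: infinitely many} (which the paper cites explicitly at the start of its proof), while on the other hand your write-up spells out the lifting and distinctness step more carefully than the paper's terse version does.
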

\begin{proof}
    By Proposition \ref{prop: infinitely many} we know that the order of a commutator in $\mathcal{D}_\ell$ is finite, say $n$.
    Let $[x_1,\ldots,x_\ell]\in\mathcal{D}_{\ell}$ and consider the subgroup $H=\langle x_1,\ldots,x_\ell\rangle$ of $G$.
    By hypothesis, $H$ is residually finite, so there exists a normal subgroup $N$ of $H$ of finite index such that $\langle[x_1,\ldots,x_{\ell}]\rangle\cap N={1}$.
    Note that the order of $[x_1,\ldots,x_\ell]N$ in $H/N$ is $n$, and since $H/N$ is finite, by Lemma \ref{gush}, each generator of the cyclic group generated by $[x_1,\ldots,x_\ell]N$ is a commutator of length $\ell$.  It follows that $\phi(n)\leq m$, where $\phi$ denotes Euler's totient function, so, in particular, $n$ is $m$-bounded.
\end{proof}

We now prove the first half of Theorem \ref{thm: main 1}.

\begin{proposition}  \label{prop: residually finite}
    Let $G$ be a residually finite group such that $|\mathcal{D}_\ell|<\infty$.
    Then there exists a subgroup $M\le G$ of finite index such that $\gamma_\ell(M)=1$.
\end{proposition}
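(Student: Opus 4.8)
The plan is to induct on the common order $n$ of the elements of $\mathcal{D}_\ell$, which is finite by Proposition~\ref{prop: infinitely many}. If $n=1$ then every commutator of length $\ell$ is trivial, so $\gamma_\ell(G)=1$ and $M=G$ works; assume $n>1$. A conjugate of a commutator of length $\ell$ of maximal order is again one, so $\mathcal{D}_\ell$ is a finite normal subset of $G$ consisting of elements of order $n$; by Dicman's Lemma (Lemma~\ref{lem: basic results}(ii)), $D=\langle\mathcal{D}_\ell\rangle$ is a finite normal subgroup of $G$. (It will also be convenient, and legitimate since $C_G(D)$ has finite index, to assume after replacing $G$ by $C_G(D)$ that $D$ is central.)

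Using residual finiteness of $G$ and the finiteness of $D$, choose $N\trianglelefteq G$ of finite index with $N\cap D=1$; then $[N,D]\le N\cap D=1$, so $ND=N\times D$. If $x_1,\dots,x_\ell\in N$ then $[x_1,\dots,x_\ell]$ lies in $\gamma_\ell(N)\le N$, and if it had order $n$ it would lie in $\mathcal{D}_\ell\subseteq D$, hence in $D\cap N=1$ — a contradiction. Thus every commutator of length $\ell$ of $N$ has order at most $n'$ for some $n'<n$. If $n'=1$ we are done with $M=N$. Otherwise I would apply the inductive hypothesis to the residually finite group $N$ — provided $N$ again has only finitely many commutators of length $\ell$ of maximal order — to obtain $M\le N$ of finite index in $N$, hence in $G$, with $\gamma_\ell(M)=1$.

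The main obstacle is precisely this last point: showing the hypothesis descends to $N$ (equivalently, to all finite-index subgroups). This is genuinely delicate, and in particular one cannot prove that $G$ has only finitely many commutators of length $\ell$ of \emph{each} fixed order, for that would force $\gamma_\ell(G)$ to be finite — whereas Theorem~\ref{thm: main 1} only guarantees this when $G$ is finitely generated. So one must relate the commutators of length $\ell$ of $N$ of order $n'$ directly to the finitely many elements of $\mathcal{D}_\ell$. The ingredients I would use are: Lemma~\ref{lem: iker marta 2}, to modify one entry of a commutator of length $\ell$ of $N$ by a suitable element of $G$ and thereby build, from a commutator of $N$ of order $n'$, a commutator of $G$ of strictly larger order that still determines it; Lemma~\ref{gush} applied inside the finite quotients of $G$, together with a compactness argument over those quotients, which shows that $\mathcal{D}_\ell$ is closed under passing to generators of the cyclic subgroups it meets, so that reaching a commutator of order $n$ pins the original commutator down to finitely many possibilities; and Dicman's Lemma once more to keep the ambient normal subsets finite. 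I expect the heart of the proof — and the reason the case $\ell\ge 3$ is so much harder than $\ell=2$ — to lie in making this transfer work, since an entry of a commutator of length $\ell$ interacts with the other $\ell-1$ entries through the conjugations in Lemma~\ref{lem: iker marta 2}, and these have to be absorbed without leaving $N$ or spoiling the control on orders.
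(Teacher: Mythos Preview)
Your proposal has a genuine gap, and you have essentially identified it yourself: the induction on $n$ requires that the hypothesis ``$|\mathcal{D}_\ell|<\infty$'' descends to the finite-index subgroup $N$, and you do not prove this. The ideas you sketch do not close the gap. Given a commutator $[y_1,\dots,y_\ell]\in N$ of order $n'<n$, there is no mechanism offered for producing from it a commutator of length $\ell$ in $G$ of order $n$ that still \emph{determines} $[y_1,\dots,y_\ell]$; Lemma~\ref{lem: iker marta 2} lets you perturb entries, but it gives no control pushing the order \emph{up}, and Lemma~\ref{gush} only tells you about generators of cyclic groups already generated by commutators of maximal order. Without this step the induction does not start, and the surrounding reductions (Dicman, centrality of $D$, $N\cap D=1$) are the easy part.

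More to the point, the induction on $n$ is unnecessary: the very subgroup $N$ you construct (the paper calls it $M$) already satisfies $\gamma_\ell(N)=1$, in one shot. The paper proves this directly by the following device. Say $g\in G$ is \emph{$(\ell-k)$-related to $\mathcal{D}_\ell$} if $[g,g_1,\dots,g_{\ell-k}]\in\mathcal{D}_\ell$ for some $g_i\in G$. Using Lemma~\ref{lem: iker marta 2} together with $N\cap D=1$, one shows by reverse induction on $k$ two things: first, being $(\ell-k)$-related is stable under multiplying each entry of $[x_1,\dots,x_k]$ by an element of $N$; second, if $[x_1,\dots,x_k]$ is $(\ell-k)$-related to $\mathcal{D}_\ell$, then $[x_1,\dots,x_k,y_{k+1},\dots,y_\ell]\in D$ for all $y_{k+1},\dots,y_\ell\in N$. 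Taking $k=0$ gives $\gamma_\ell(N)\le D$, and $N\cap D=1$ finishes. The point is that Lemma~\ref{lem: iker marta 2} expresses the perturbed commutator as a product of a conjugate of something in $D$ (by induction) and a commutator with one entry in $N$; modulo $N$ the second factor disappears, so the product lands in $DN$, and maximality of order plus $D\cap N=1$ forces it into $D$. This is exactly the ``absorption of conjugations'' you anticipated, but organised so that one never needs to know anything about $\mathcal{D}_\ell(N)$.
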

\begin{proof}
    By Lemma \ref{lem: basic results}(ii) we know that $D=\langle\mathcal{D}_{\ell}\rangle$ is finite. So since $G$ is residually finite, there exists a normal subgroup $M$ of finite index in $G$ such that $M\cap D=1$.

    We claim that if $k\in\{1,\ldots,\ell\}$ and $x_1,\ldots,x_k\in G$ are elements such that $[x_1,\ldots,x_k]$ is $(\ell-k)$-related to $\mathcal{D}_{\ell}$, then for any $y_1,\ldots,y_k \in M$ the commutator $[y_1x_1,\ldots,y_kx_k]$ is also $(\ell-k)$-related to $\mathcal{D}_{\ell}$.
 
    For a tuple $\mathbf{y}=(y_1,\ldots,y_k)\in M\times\cdots\times M$ write $j_{\mathbf{y}}=\max\{j\mid y_1=\cdots=y_j=1\}$ and we proceed by reverse induction on $j_{\mathbf{y}}$.
    For tuples $\mathbf{y}$ such that $j_{\mathbf{y}}=k$ there is nothing to show, so fix a tuple $\mathbf{y}=(1,\ldots,1,y_j,\ldots,y_k)$ with $j_{\mathbf{y}}=j< k$ and assume that there exist $g_{k+1},\ldots,g_\ell\in G$ such that
    $$
    c:=[x_1,\ldots,x_j,y_{j+1}x_{j+1},\ldots,y_kx_k,g_{k+1},\ldots,g_\ell]\in\mathcal{D}_{\ell}.
    $$
    Write for brevity $z_i=y_ix_i$ for every $i\in\{j,\ldots,k\}$.
    By Lemma \ref{lem: iker marta 2}, there exist $h_{j+1},\ldots,h_{\ell+1}\in G$ such that
    \begin{align*}
        [x_1,\ldots,x_{j-1},z_j,&\ldots,z_k,g_{k+1},\ldots,g_\ell]\\
        &=[x_1,\ldots,x_{j-1},y_j,z_{j+1}^{h_{j+1}},\ldots, z_k^{h_k},g_{k+1}^{h_{k+1}},\ldots,g_\ell^{h_\ell}]^{h_{\ell+1}}c.
    \end{align*}
    This shows that
    $$
    [x_1,\ldots,x_{j-1},z_j,\ldots,z_k,g_{k+1},\ldots,g_\ell]
    \equiv c \pmod{M},
    $$    
    and since $M\cap D=1$, it follows that
    $$
    [x_1,\ldots,x_{j-1},z_{j},\ldots,z_k,g_{k+1},\ldots,g_\ell]\in\mathcal{D}_{\ell},
    $$
    as claimed.

    We will now show that if $k\in\{0,\ldots,\ell\}$ and $x_1,\ldots,x_{k}\in G$ are elements such that $[x_1,\ldots,x_{k}]$ is $(\ell-k)$-related to $\mathcal{D}_{\ell}$, then
    \begin{equation}      
        \label{eq: N nilpotent}
        [x_1,\ldots,x_{k},y_{k+1},\ldots,y_\ell]\in D
    \end{equation}
    for every $y_{k+1},\ldots,y_\ell \in M$. Thus, taking $k=0$, it will follow that $\gamma_\ell(M)\le D$.
    Again, we will proceed by reverse induction on $k$. If $k=\ell$, then the assertion follows trivially, so suppose $k<\ell$.

    Fix $x_1,\ldots,x_{k} \in G$ such that $[x_1,\ldots,x_{k}]$ is $(\ell-k)$-related to $\mathcal{D}_{\ell}$, and assume that for some $x_{k+1}\in G$ the commutator $[x_1,\ldots,x_{k+1}]$ is $(\ell-(k-1))$-related to $\mathcal{D}_{\ell}$.
    Take $y_{k+1},\ldots,y_\ell\in N$.
    We need to show that $d:=[x_1,\ldots,x_{k},y_{k+1},\ldots,y_\ell]\in D$.
    Again, by Lemma \ref{lem: iker marta 2}, there exist $h_{k+1},\ldots,h_{\ell+1}\in N$ such that
    $$
    [x_1,\ldots x_{k},x_{k+1}y_{k+1},y_{k+2},\ldots,y_\ell]=[x_1,\ldots ,x_{k+1},y_{k+2}^{h_{k+2}},\ldots,y_\ell^{h_\ell}]^{h_{\ell+1}}d.
    $$
    We have $x_{k+1}y_{k+1}=y_{k+1}[y_{k+1},x_{k+1}^{-1}]x_{k+1}$ so it follows from the above claim that $[x_1,\ldots ,x_{k},x_{k+1}y_{k+1}]$ is $(\ell-(k-1))$-related to $\mathcal{D}_{\ell}$.
    Hence, inductive hypothesis yields
    $$
    [x_1,\ldots x_{k},x_{k+1}y_{k+1},y_{k+2},\ldots,y_\ell] \in D
    $$
    and
    $$
    [x_1,\ldots,x_{k+1},y_{k+1}^{h_{k+1}},\ldots,y_l^{h_l}]^{h_{\ell+1}}\in D,
    $$
    so we obtain that $d\in D$. This proves (\ref{eq: N nilpotent}). As was already mentioned, considering the case $k=0$ we deduce that $\gamma_\ell(M)\le D$.
    Since $D\cap M=1$, we finally obtain that $\gamma_\ell(M)=1$. The proof is complete.
\end{proof}

The next proposition will be useful in what follows.

\begin{proposition}
    \label{prop: virtually implies finite-by} Let $r$ be a positive integer and $G$ an $r$-generator group such that $|\mathcal{D}_\ell|=m<\infty$.
    Assume that $G$ contains a normal subgroup $M$ of finite index such that $\gamma_\ell(M)$ is finite.
    Then $\gamma_\ell(G)$ is finite of $(m,\ell,r,|G:M|,|\gamma_\ell(M)|)$-bounded order.
\end{proposition}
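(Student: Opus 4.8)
The plan is to bound $|\gamma_\ell(G)|$ by controlling the finite normal subgroup $D=\langle\mathcal{D}_\ell\rangle$. First I would record that $D$ has $m$‑bounded order: the set $\mathcal{D}_\ell$ is a normal subset of $G$ consisting of at most $m$ elements, all of the same finite order $n$ (finiteness of $n$ being Proposition~\ref{prop: infinitely many}), so Dicman's lemma (Lemma~\ref{lem: basic results}(ii)) makes $D$ finite of $(m,n)$‑bounded order; and $n$ itself is $m$‑bounded because $G$ is polycyclic‑by‑finite — $M$ is finitely generated (having finite index in the $r$‑generated group $G$), so $M/\gamma_\ell(M)$ is finitely generated nilpotent, hence polycyclic, and as $\gamma_\ell(M)$ is finite $M$ is polycyclic — whence $G$ is in particular residually finite and Proposition~\ref{prop: bounded order} yields $\phi(n)\le m$. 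Thus $|D|$ is $m$‑bounded.

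Next I would set up a relative commutator calculus. Fix a transversal $R$ of $M$ in $G$, so $|R|=t:=|G:M|$, and write each entry of an arbitrary commutator of length $\ell$ as a product of an element of $M$ and an element of $R$. Expanding using Lemma~\ref{lem: commutators basic}(ii) and Lemma~\ref{lem: iker marta 2}, one sees that $\gamma_\ell(G)$ is generated by the $G$‑conjugates of (a) the at most $t^\ell$ commutators of length $\ell$ whose entries all lie in $R$, and (b) the commutators of length $\ell$ having at least one entry in $M$. A commutator of type (b) lies in $M$ (commutating an element of the normal subgroup $M$ with anything keeps one inside $M$); being of order at most $n$, such elements generate, modulo $\gamma_\ell(M)$, a torsion — hence finite — subgroup of the finitely generated nilpotent group $M/\gamma_\ell(M)$, so together with $\gamma_\ell(M)$ they generate a finite normal subgroup $E\le M$ of $G$.

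I would then close the argument by induction on $n$ (legitimate, as $n$ is $m$‑bounded; the case $n=1$ being trivial). The commutators of type (a) of order exactly $n$ belong to $\mathcal{D}_\ell$, so in $\bar G:=G/D$ every commutator of length $\ell$ has order strictly less than $n$, while $\bar G$ is $r$‑generated, $\overline M:=MD/D$ has index at most $t$ in $\bar G$, and $\gamma_\ell(\overline M)$ has order at most $|\gamma_\ell(M)|$. Applying the proposition to $\bar G$ bounds $|\gamma_\ell(\bar G)|=|\gamma_\ell(G)/D|$, and combining this with the bound on $|D|$ and the finiteness of $E$ gives that $\gamma_\ell(G)$ is finite of $(m,\ell,r,|G:M|,|\gamma_\ell(M)|)$‑bounded order.

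The two genuinely difficult points — and the ones I expect to be the main obstacle — are, first, that to feed $\bar G$ back into the induction one must control the number of commutators of length $\ell$ of maximal order in $\bar G$ (in terms of $m,\ell,r,t$), whereas the passage $G\to G/D$ a priori only guarantees that this maximal order drops, so one needs a finer analysis of commutators of non‑maximal order; and second, that $|E|$ must be bounded, yet $E$ lives in the torsion subgroup of the finitely generated nilpotent group $M/\gamma_\ell(M)$, which can be arbitrarily large, so finite generation — the parameter $r$ — must be used decisively, presumably via a count of elements of maximal order together with Cocke--Venkataraman's theorem \cite{CoVe19} applied in a suitable finite quotient. Everything else should be routine manipulation with the commutator identities of Lemmas~\ref{lem: commutators basic} and~\ref{lem: iker marta 2}.
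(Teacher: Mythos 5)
You are right to flag both obstructions, and neither is cosmetic; as written the proposal does not close. The induction on $n$ founders on the first difficulty you name: after passing to $\bar G=G/D$ the maximal order of a length-$\ell$ commutator does strictly drop, but to re-apply the proposition to $\bar G$ you need a bound on $|\mathcal{D}_\ell(\bar G)|$ in terms of the original data. That bound is simply not available: a commutator of the new maximal order in $\bar G$ can be the image of infinitely many length-$\ell$ commutators of $G$ of non-maximal order, about which the hypothesis $|\mathcal{D}_\ell(G)|=m$ says nothing. The second difficulty (bounding $|E|$) is also genuine, and your suggested appeal to Cocke--Venkataraman in an unspecified finite quotient is a hope rather than an argument.

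The paper's proof is shorter and sidesteps both issues by dropping the transversal decomposition and the induction on $n$. Having observed, as you do, that $G$ is residually finite (finite-by-nilpotent-by-finite and finitely generated) so that $n$ is $m$-bounded via Proposition~\ref{prop: bounded order}, it sets $G_\ell=\{[x_1,\ldots,x_\ell]:x_i\in G\}$ and $T=\langle G_\ell\cap M\rangle$, a normal subgroup of $G$. The decisive observation is that modulo $T$ the subgroup $M$ becomes $(\ell-1)$-central: for $y\in M$ and $g_1,\ldots,g_{\ell-1}\in G$ the element $[y,g_1,\ldots,g_{\ell-1}]$ is a length-$\ell$ commutator lying in $M$ (since $M\trianglelefteq G$), hence lies in $T$, so $M/T\le Z_{\ell-1}(G/T)$ and $|G/T:Z_{\ell-1}(G/T)|\le|G:M|$. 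Lemma~\ref{lem: basic results}(iii) (Baer's theorem) then bounds $|\gamma_\ell(G/T)|$, and $|\gamma_\ell(G)|\le|T|\,|\gamma_\ell(G/T)|$. Your second worry is resolved at the level of $T$: one has $\gamma_\ell(M)\le T$, so $T/\gamma_\ell(M)$ is a subgroup of the finitely generated nilpotent group $M/\gamma_\ell(M)$ of class $<\ell$; such a subgroup is generated by an $(r,\ell,|G:M|)$-bounded number of elements, each of order dividing $n$, which forces its order to be $(m,\ell,r,|G:M|)$-bounded. No count of elements of maximal order is needed. If you wish to salvage your route, you would essentially have to replace both the transversal decomposition and the induction on $n$ with this direct centrality argument; the two are not really different proofs of the same plan.
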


\begin{proof}
   Since $M$ has finite index in $G$, it is finitely generated with the minimum number of generators bounded in terms of $r$ and $|G:M|$.
    Since $G$ is a finitely generated finite-by-nilpotent-by-finite group, it follows that $G$ is residually finite. Hence, by Proposition \ref{prop: bounded order}, every commutator in $|\mathcal{D}_\ell|$ has $m$-bounded order.
    Let
    $$
    G_\ell=\{[x_1,\ldots,x_\ell]\mid x_1,\ldots,x_\ell\in G\}.
    $$
    Then the subgroup $T$ generated by  $G_\ell\cap M$ is a finite subgroup of $M$ of $(m,r,|G:M|,|\gamma_\ell(M)|)$-bounded order.
    Also, since we have $M/T \leq Z_{\ell-1}(G/T)$, it follows that
    $$
    |G/T:Z_{\ell-1}(G/T)|\le |G:M|.
    $$
    In view of Lemma \ref{lem: basic results}(iii) we conclude that $\gamma_\ell(G/T)$, and hence $\gamma_\ell(G)$, is finite of $(m,\ell,r,|G:M|,|\gamma_{\ell}(M)|)$-bounded order.
\end{proof}

\begin{proof}[Proof of Theorem \ref{thm: main 1}]
    The result is straightforward from Proposition \ref{prop: residually finite} and Proposition \ref{prop: virtually implies finite-by}. 
\end{proof}

\section{Proof of Theorem \ref{thm: main}}

We now introduce some notation to be used in this section.
Let $x_1,\ldots,x_\ell\in G$ and set $d=[x_1,\ldots,x_\ell]$. For every subset $I=\{u_1,\ldots,u_t\}\subseteq\{1,\ldots,\ell\}$ the map $\varphi_I$ is defined as follows: 
\begin{align*}
    \varphi_I : G\times\overset{t}{\cdots}\times G&\longrightarrow G\\
    (y_{u_1},\ldots,y_{u_t})&\longmapsto [z_1,\ldots,z_\ell],
\end{align*}
where
$$
z_i=\left\{
\begin{array}{ll}
    y_i & \text{if }\ i\in I,\\
    x_i & \text{if }\ i\not\in I.
\end{array}
\right.
$$
For brevity, if $(y_{u_1},\ldots,y_{u_t})\in G^t$, we will just write
$$
\varphi_I(y_{u_1},\ldots,y_{u_t})=\varphi_I(y_i),
$$
and if $u\in\{1,\ldots,\ell\}$ and $y_u\in G$, then we will just write $\varphi_{\{u\}}(y_i)=\varphi_u(y_u)$.
We will thus reserve the symbol $i$ for the indices ranging over the subsets (such as $I$ or $\{u\}$) appearing as subscripts of $\varphi$.
Similarly, if we consider subsets $N_{u_1},\ldots,N_{u_t}\subseteq G$, then we will write
$$
\varphi_I(N_i)=\varphi_I(N_{u_1},\ldots,N_{u_t})
$$
and $\varphi_{\{u\}}(N_i)=\varphi_u(N_u)$.

The elements $x_1,\ldots,x_\ell\in G$ used in the definition of $\varphi_I$, for $I\subseteq\{1,\ldots,\ell\}$, will always be clear from the context.

\begin{lemma}
    \label{lem: remove conjugates}
    Let $G$ be a group and let $x_1,\ldots,x_\ell\in G$. Write $d=[x_1,\ldots,x_\ell]$, and suppose there exist $k\in\{0,\ldots,\ell\}$ and $N\trianglelefteq G$ such that $N\le C_G(d)$ and $\varphi_I(N)=1$ for every non-empty subset $I\subseteq \{k+1,\ldots,\ell\}$ (if $k=\ell$, then there is no such $I$).
    Then
    $$
    [x_1,\ldots,x_{k},x_{k+1}y_{k+1},\ldots,x_\ell y_\ell]=d
    $$
    for every $y_{k+1},\ldots,y_\ell\in N$.
\end{lemma}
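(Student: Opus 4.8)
The plan is to prove the lemma by downward induction on $k$ (equivalently, induction on $\ell-k$). The base case $k=\ell$ is trivial: the left-hand side is then literally $[x_1,\ldots,x_\ell]=d$. For the inductive step, fix $k<\ell$ and assume the statement holds with $k+1$ in place of $k$ (for every admissible tuple and every normal subgroup). Fix $y_{k+1},\ldots,y_\ell\in N$ and apply Lemma~\ref{lem: iker marta 2} at position $k+1$ to split off the $(k+1)$-st entry $x_{k+1}y_{k+1}$: taking the special element to be $x_{k+1}$, the ``new'' factor to be $y_{k+1}$, and the surrounding entries to be $x_1,\ldots,x_k$ and $x_{k+2}y_{k+2},\ldots,x_\ell y_\ell$, one obtains
$$
[x_1,\ldots,x_k,x_{k+1}y_{k+1},x_{k+2}y_{k+2},\ldots,x_\ell y_\ell]=A\cdot B,
$$
where $A=[x_1,\ldots,x_k,x_{k+1},(x_{k+2}y_{k+2})^{w_{k+2}},\ldots,(x_\ell y_\ell)^{w_\ell}]^{w_{\ell+1}}$ for some $w_{k+2},\ldots,w_{\ell+1}\in\langle y_{k+1}\rangle^G\le N$, and $B=[x_1,\ldots,x_k,y_{k+1},x_{k+2}y_{k+2},\ldots,x_\ell y_\ell]$. (Here $w_{k+2},\ldots,w_{\ell+1}$ are the conjugators produced by Lemma~\ref{lem: iker marta 2}, renamed to avoid a clash with the $y_i$; when $k+1=\ell$ there are no perturbed entries after position $k+1$ and the expression for $A$ degenerates to $d^{w_{\ell+1}}$.)

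I would then dispose of $A$ and $B$ separately, in each case invoking the inductive hypothesis for the parameter $k+1$. For $A$: since $N\trianglelefteq G$ and $w_i\in N$, we have $(x_iy_i)^{w_i}=x_i\,[x_i,w_i]\,y_i^{w_i}=x_iy_i'$ with $y_i':=[x_i,w_i]y_i^{w_i}\in N$ for $k+2\le i\le\ell$, so $A=[x_1,\ldots,x_{k+1},x_{k+2}y'_{k+2},\ldots,x_\ell y'_\ell]^{w_{\ell+1}}$; the hypotheses of the lemma for the parameter $k+1$ with the \emph{same} tuple and the \emph{same} $N$ are contained in those for $k$, so by induction the inner commutator equals $d$, and $A=d^{w_{\ell+1}}=d$ because $w_{\ell+1}\in N\le C_G(d)$. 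For $B$: apply the lemma to the tuple $\tilde x_1,\ldots,\tilde x_\ell$ obtained from $x_1,\ldots,x_\ell$ by replacing $x_{k+1}$ with $y_{k+1}$, again with parameter $k+1$ and the same $N$. Here $\tilde d:=[\tilde x_1,\ldots,\tilde x_\ell]=\varphi_{\{k+1\}}(y_{k+1})=1$ by hypothesis, so the condition $N\le C_G(\tilde d)$ is vacuous; and for a non-empty $J\subseteq\{k+2,\ldots,\ell\}$, the relevant constraint for the modified tuple (replace the positions in $J$ by elements of $N$) coincides with $\varphi_{J\cup\{k+1\}}(N)=1$ for the original tuple, because $y_{k+1}\in N$, and this holds by the hypothesis for $k$. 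Hence the induction hypothesis gives $B=\tilde d=1$, and therefore $[x_1,\ldots,x_k,x_{k+1}y_{k+1},\ldots,x_\ell y_\ell]=A\cdot B=d$, which closes the induction.

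Structurally the argument is short; the real work is the bookkeeping in the two reductions. The point I expect to need the most care is checking that the hypotheses ``$N\le C_G(d)$'' and ``$\varphi_I(N)=1$ for every non-empty $I\subseteq\{k+1,\ldots,\ell\}$'' descend correctly to the parameter $k+1$ in both cases — in particular that the $B$-reduction replaces $x_{k+1}$ by an element of $N$, which shifts every index set $J\subseteq\{k+2,\ldots,\ell\}$ to $J\cup\{k+1\}$ and makes the target commutator $\tilde d$ equal to the $I=\{k+1\}$ instance of the hypothesis, namely $[x_1,\ldots,x_k,y_{k+1},x_{k+2},\ldots,x_\ell]=1$. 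Beyond that and the absorption identity $(x_iy_i)^{w_i}\in x_iN$, everything follows directly from $N\trianglelefteq G$ and Lemma~\ref{lem: iker marta 2}.
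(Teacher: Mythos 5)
Your proposal is correct and follows essentially the same route as the paper: the same base case $k=\ell$, the same application of Lemma~\ref{lem: iker marta 2} at position $k+1$ to split the commutator into the two factors $A$ and $B$, and the same two invocations of the inductive hypothesis — once with the original tuple (after absorbing conjugation by $w_i\in N$ into the $N$-perturbation, and killing $w_{\ell+1}$ via $N\le C_G(d)$), and once with $y_{k+1}$ substituted for $x_{k+1}$, where the target commutator is trivial by the $\varphi_{\{k+1\}}(N)=1$ hypothesis. The only cosmetic difference is that you spell out the verification that the lemma's hypotheses descend to the modified tuple, which the paper leaves implicit.
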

\begin{proof}
    If $k=\ell$ the lemma follows trivially, so assume $k<\ell$ and suppose, arguing by induction, that 
    $$
    [x_1,\ldots,x_{k+1},x_{k+2}y_{k+2},\ldots,x_\ell y_\ell]=d
    $$    
    for every $y_{k+2},\ldots,y_\ell\in N$.
    Fix $y_{k+1},\ldots,y_\ell\in N$ and observe that by Lemma \ref{lem: iker marta 2}, there exist $h_{k+2},\ldots,h_{\ell+1}\in \langle y_{k+1}\rangle^G\le N$ such that
    $$
    \varphi_{\{k+1,\ldots,\ell\}}(x_iy_i)
    =
    \varphi_{\{k+2,\ldots,\ell\}}\big((x_iy_i)^{h_i}\big)^{h_{\ell+1}}
    [x_1,\ldots,x_{k},y_{k+1},x_{k+2}y_{k+2},\ldots,x_{\ell}y_{\ell}].
    $$
    
    On the one hand, $(x_iy_i)^{h_i}=x_i[x_i,h_i]y_i^{h_i}$ for every $i\in\{k+2,\ldots,\ell\}$, and since $[x_i,h_i]y_i^{h_i}\in N\le C_G(d)$, inductive hypothesis gives
    $$
    \varphi_{\{k+2,\ldots,\ell\}}\big((x_iy_i)^{h_i}\big)^{h_{\ell+1}}=d.
    $$

    On the other hand, write $d^*=[x_1,\ldots,x_{k},y_{k+1},x_{k+2},\ldots,x_{\ell}]$ and observe that since $\varphi_I(N)=1$ for every $I\subseteq\{k+1,\ldots,\ell\}$, we have
    $d^*=1$ and $C_G(d^*)=G$.
    Thus, applying the inductive hypothesis to $d^*$, we obtain
    $$
    [x_1,\ldots,x_{k},y_{k+1},x_{k+2}y_{k+2},\ldots,x_{\ell}y_{\ell}]=d^*=1.
    $$
    It follows that 
    $$
    [x_1,\ldots,x_{k},x_{k+1}y_{k+1},\ldots,x_{\ell}y_{\ell}]=d.
    $$
\end{proof}

Assume the hypotheses of Lemma \ref{lem: remove conjugates}. Thus, for some $k\in\{0,\ldots,\ell\}$ there is $N\trianglelefteq G$ such that $N\le C_G(d)$ and $\varphi_I(N)=1$ for every non-empty $I\subseteq\{k+1,\ldots,\ell\}$.
Then, for any $I\subseteq\{k+1,\ldots,\ell\}$ and any $y_k,y_{k+1},\ldots,y_\ell\in N$, Lemma \ref{lem: iker marta 2} shows that there exist $g_{k+1},\ldots,g_{\ell+1}\in N$ such that
$$
\varphi_{\{k\}\cup I}(x_iy_i)
=
\varphi_{\{k+1,\ldots,\ell\}}(x_ig_i)^{g_{\ell+1}}
\varphi_{\{k\}\cup I}(z_i),
$$
where $z_{k}=y_{k}$ and $z_i=x_iy_i$ if $i>k$, and by Lemma \ref{lem: remove conjugates}, we obtain
\begin{equation}
    \label{eq: stability}
    \varphi_{\{k\}\cup I}(x_iy_i)
    =
    d\varphi_{\{k\}\cup I}(z_i).
\end{equation}
Recall that $D=\langle \mathcal{D}_\ell\rangle$.
We will say that $G$ is \emph{$d$-stable} 
if, whenever such a subgroup $N$ exists for some $k\in\{0,\ldots,\ell\}$, we have
$$
\varphi_{\{k\}\cup I}(x_iy_i)\in D
$$
for every $I\subseteq\{k+1,\ldots,\ell\}$ and every $y_k,\ldots,y_\ell\in N$.
As before, the elements $x_1,\ldots,x_\ell\in G$ will be clear from the context.

As we will see, our proof works not only for the groups satisfying the conditions in Theorem \ref{thm: main}, but for all $d$-stable groups in general.
The following proposition provides some sufficient conditions for a group to be $d$-stable.

\begin{proposition}
    \label{prop: D-stable}
    Let $G$ be a group, and fix $d=[x_1,\ldots,x_\ell]\in\mathcal{D}_\ell$.
    If one of the following conditions holds, then $G$ is $d$-stable:
    \begin{enumerate}[(i)]
        \item The order of $d$ is prime-power.
        \item $G$ is locally nilpotent.
        \item The order of $d$ is $p^{\alpha}q^{\beta}$, where $p$ and $q$ are odd primes and $\alpha,\beta>0$.
    \end{enumerate}
\end{proposition}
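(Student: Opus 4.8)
The plan is to fix $k\in\{0,\dots,\ell\}$, a normal subgroup $N$ as in the definition of $d$-stability, a subset $I\subseteq\{k+1,\dots,\ell\}$, and elements $y_k,\dots,y_\ell\in N$, and to prove $\varphi_{\{k\}\cup I}(x_iy_i)\in D$. When $k=0$ this is immediate from Lemma \ref{lem: remove conjugates} (which gives $\varphi_{\{0\}\cup I}(x_iy_i)=d\in D$), so assume $k\ge 1$. I would first carry out two reductions common to all three cases. Replacing $G$ by $\langle x_1,\dots,x_\ell,y_k,\dots,y_\ell\rangle$ and $N$ by its intersection with this subgroup, one checks that all the relevant hypotheses are inherited, that $d$ still has maximal order $n$ among commutators of length $\ell$, and that the subgroup $D$ can only shrink; so we may assume $G$ is finitely generated, and nilpotent in case (ii). Next, putting $v:=\varphi_{\{k\}\cup I}(z_i)$ as in (\ref{eq: stability}), a straightforward induction using $N\trianglelefteq G$ and $y_k\in N$ shows $v\in N$; hence $v\in C_G(d)$, so $\langle d,v\rangle$ is abelian, and (\ref{eq: stability}) gives $\varphi_{\{k\}\cup I}(x_iy_i)=dv$. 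As $v$ and $dv$ are commutators of length $\ell$, we have $|v|,|dv|\le n$. If $|dv|=n$ then $dv\in\mathcal{D}_\ell\subseteq D$, whence $v=d^{-1}(dv)\in D$; so from now on assume $|dv|<n$, which forces $n$ finite and $\langle d,v\rangle$ a finite abelian group.

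Case (i) is now settled by arithmetic in $\langle d,v\rangle$. Write $n=p^{a}$ and $v=v_pv_{p'}$. If $v_{p'}\neq1$, then either $|v_p|<p^{a}$, which forces $|dv_p|=p^{a}$ (since $d$ has order $p^{a}$) and hence $|dv|=p^{a}|v_{p'}|>p^{a}$, or $|v_p|=p^{a}$, which gives $|v|=p^{a}|v_{p'}|>p^{a}$; in either case a contradiction. So $v=v_p$, and $\langle d,v\rangle$ is a $p$-group; if $|v|<p^{a}$ the same observation yields $|dv|=p^{a}$, again a contradiction. Hence $|v|=p^{a}=n$, so $v\in\mathcal{D}_\ell\subseteq D$. (No finiteness of $G$ is used here.)

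For cases (ii) and (iii) the substance is the case $|dv|<n$. My strategy for (iii), where $n=p^{\alpha}q^{\beta}$ with $p,q$ odd, is to produce an integer $j$ coprime to $|v|$ with $dv^{j}\in\mathcal{D}_\ell$; then $v^{j}=d^{-1}(dv^{j})\in D$ and $v\in\langle v^{j}\rangle\subseteq D$. This needs two things. First, a lemma saying that $dv^{j}$ is again one of the perturbed commutators $\varphi_{\{k\}\cup I'}(x_iy_i')$ with $y_i'\in N$ — in effect that replacing $y_k$ by $y_k^{\,j}$ turns $v$ into $v^{j}$ modulo $D$; this is exactly where the hypothesis $\varphi_I(N)=1$ for non-empty $I\subseteq\{k+1,\dots,\ell\}$ is used, to annihilate the commutator error terms, and it should also force $v$ to have no component of order prime to $n$. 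Second, the existence of such a $j$: using the inequalities above together with the fact that $n$ has exactly two prime divisors, one shows $|v_p|\le p^{\alpha}$ and $|v_q|\le q^{\beta}$, so that the requirement $|dv^{j}|=n$ becomes, in each primary component, a single congruence condition on $j$; thus $j$ need only avoid at most two residue classes modulo $p$ (one of them $0$, forced by $\gcd(j,p)=1$) and at most two modulo $q$, and since $p$ and $q$ are odd there is room, so the Chinese Remainder Theorem produces $j$. For (ii): if $d$ has infinite order then $\mathcal{D}_\ell$ is the set of commutators of length $\ell$ of infinite order, and since $G$ is finitely generated nilpotent, $v$ of finite order gives $dv$ of infinite order (so $dv\in\mathcal{D}_\ell$ and $v\in D$) while $v$ of infinite order lies in $\mathcal{D}_\ell$ directly; if $d$ has finite order, I would pass to the finite quotients $G/K$ with $K\cap\langle d\rangle=1$, in which $d$ keeps maximal order $n$, all hypotheses descend, and Lemma \ref{gush} is available, and then use that every subgroup of a finitely generated nilpotent group is closed in the profinite topology to descend the conclusion back to $G$.

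The step I expect to be the main obstacle is the ``multiplicativity'' lemma behind the (iii)-argument, together with the corresponding analysis in the finite quotients for (ii): one must understand precisely how the relations $\varphi_I(N)=1$ constrain $v=\varphi_{\{k\}\cup I}(z_i)$ — in particular why they exclude configurations in which $\langle d,v\rangle$ contains no commutator of length $\ell$ of order $n$ other than a power of $d$ — and then check that the substitutions involved keep the expressions left-normed commutators of length exactly $\ell$. The prime-power case is clean and finiteness-free; in the other two, the real work is in tying the abelian-group arithmetic on $\langle d,v\rangle$ back to genuine commutators of length $\ell$ in $G$.
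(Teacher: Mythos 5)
Your case (i) is correct and matches the paper's argument in spirit: both reduce to arithmetic in the finite abelian group $\langle d,v\rangle$ using that $v$ and $dv$ are length-$\ell$ commutators with order bounded by $n=p^{\alpha}$, and no finiteness of $G$ is needed. The general framework — isolate $v=\varphi_{\{k\}\cup I}(z_i)\in N$, use $(4.2)$ to write $\varphi_{\{k\}\cup I}(x_iy_i)=dv$, and note $[d,v]=1$ — is exactly right.

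However, you have correctly identified that your plan for cases (ii) and (iii) has genuine gaps, and those gaps are where all the work lies. For (iii), the ``multiplicativity lemma'' you want (that replacing $y_k$ by $y_k^{\,j}$ turns $v$ into $v^{j}$ modulo $D$) is doubtful: the hypothesis $\varphi_I(N)=1$ is only for $I\subseteq\{k+1,\ldots,\ell\}$, so it controls perturbations in the last coordinates with $x_k$ fixed in position $k$, and it does not give the vanishing you would need for the error terms that arise when position $k$ carries $y_k^{\,j-1}\in N$ rather than $x_k$. The paper instead proves something weaker but sufficient: only that $g'=dv^{-1}$ is a commutator of length $\ell$. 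It does so by writing $v^{-1}=\varphi_{\{k,\ldots,\ell\}}(a_i)^{g_{\ell+1}}$ via Lemma \ref{lem: iker marta 2}, then substituting $b_k=x_ka_k$, $b_i=a_i$ for $i>k$, and using Lemma \ref{lem: iker marta 2} together with Lemma \ref{lem: remove conjugates} to conclude $\varphi_{\{k,\ldots,\ell\}}(b_i)^{g_{\ell+1}}=dv^{-1}$. Once $g=dv$, $g'=dv^{-1}$ and $h=v$ are all known to be length-$\ell$ commutators, the contradiction is purely arithmetic: assuming none of them lies in $\mathcal{D}_\ell$, one finds positive integers $a,b$ coprime to $q$ with $g^{q^{\beta-1}a}=(g')^{q^{\beta-1}b}=1$, and then $d^{2c}=g^c(g')^c=1$ with $c=q^{\beta-1}ab$, contradicting $q^{\beta}\mid n$ because $q$ is odd. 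Note this argument never requires choosing a special $j$, and it is the oddness of $q$ (and $p$, by symmetry) that powers the $d^{2c}=1$ contradiction.

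For (ii) the gap is different but equally real: after passing to a finite nilpotent quotient, you do not say how to conclude, and neither (i) nor (iii) covers an arbitrary finite $n$. The paper's argument in the finite nilpotent case is the Sylow decomposition. It first shows that for each Sylow $P_j$ of $G$, the set $\mathcal{D}_\ell^{(j)}$ of maximal-order length-$\ell$ commutators in $P_j$ is contained in $D$, by splicing a maximal-order commutator of $P_{j^*}$ together with the $P_j$-components of $d$ for $j\ne j^*$ to produce an element of $\mathcal{D}_\ell$ having the given commutator as a power. It then decomposes each $y_i$ into its Sylow components $y_{ij}$, sets $N_j=N\cap P_j$, checks the hypotheses of Lemma \ref{lem: remove conjugates} inside each $P_j$, applies case (i) in each $P_j$ to get $\varphi^{(j)}_{\{k\}\cup I}(x_{ij}y_{ij})\in\langle\mathcal{D}_\ell^{(j)}\rangle\subseteq D$, and multiplies over $j$. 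Your plan for passing to finite quotients $G/K$ and pulling back via the profinite topology is compatible with this, but the Sylow step is the missing content, and your appeal to Lemma \ref{gush} does not substitute for it.

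One smaller remark: your reduction of all three cases to finitely generated $G$ at the outset is sound (the hypotheses and the set $\mathcal{D}_\ell$ restrict correctly, since $d$ lies in the subgroup and orders only decrease), but the paper only needs it in case (ii); cases (i) and (iii) are argued directly on orders inside $\langle d,v\rangle$ (respectively $\langle d,v,g'\rangle$) with no finiteness hypothesis on $G$.
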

\begin{proof} 
    Suppose there exists $k\in\{0,\ldots,\ell\}$ and $N\trianglelefteq G$ with $N\le C_G(d)$ such that $\varphi_I(N)=1$ for every non-empty $I\subseteq\{k+1,\ldots,\ell\}$.
    As in (\ref{eq: stability}), for every $y_k,\ldots,y_\ell\in N$ and every $I\subseteq\{k+1,\ldots,\ell\}$ we have
    \begin{equation}
        \label{eq: stability proof 2}
        \varphi_{\{k\}\cup I}(x_iy_i)=d\varphi_{\{k\}\cup I}(z_i),
    \end{equation}
    where $z_{k}=y_{k}$ and $z_i=x_iy_i$ if $i>k$.
    We need to show that 
    \begin{equation}
        \label{eq: stability proof}
        \varphi_{\{k\}\cup I}(x_iy_i)\in D.
    \end{equation}

    \vspace{5pt}
    
    Suppose first that (i) holds, so that the order of $d$ is $p^\alpha$, where $p$ is a prime and $\alpha$ a non-negative integer.
    If $\varphi_{\{k\}\cup I}(x_iy_i)\in\mathcal{D}_\ell$, then we are done, and if $\varphi_{\{k\}\cup I}(z_i)\in\mathcal{D}_\ell$, then (\ref{eq: stability proof}) follows trivially from (\ref{eq: stability proof 2}).
    Thus, suppose the orders of both $\varphi_{\{k\}\cup I}(x_iy_i)$ and $\varphi_{\{k\}\cup I}(z_i)$ are strictly smaller than $p^{\alpha}$.
    Note, however, that $\varphi_{\{k\}\cup I}(z_i)\in N\le C_G(\mathcal{D}_\ell)$, so that $\varphi_{\{k\}\cup I}(x_iy_i)$ and $\varphi_{\{k\}\cup I}(z_i)$ commute and $d=\varphi_{\{k\}\cup I}(x_iy_i)\varphi_{\{k\}\cup I}(z_i)^{-1}$. Therefore $p^{\alpha}$ must be divisible by the least common multiple of the orders of $\varphi_{\{k\}\cup I}(x_iy_i)$ and $\varphi_{\{k\}\cup I}(z_i)$.
    This is a contradiction, and hence (\ref{eq: stability proof}) is satisfied. 

    \vspace{5pt}

    Suppose now that (ii) holds, and assume first that $G$ is finite.
    Let $P_1,\ldots,P_s$ be the Sylow subgroups of $G$ for some primes $p_1,\ldots,p_s$, and for every $j\in\{1,\ldots,s\}$, let $\mathcal{D}_\ell^{(j)}$ be the set of commutators of length $\ell$ of maximal order in $P_j$.
    We will first show that $\mathcal{D}_\ell^{(j)}\subseteq D$ for every $j\in\{1,\ldots,s\}$.

    Let $n=p_1^{n_1}\cdots p_s^{n_s}$ be the order of $d$, with $n_1,\ldots,n_s$ positive integers, and for every $i \in \{1,\ldots,\ell\}$, write $x_i=\prod_{1\le j\le r}x_{ij}$, where $x_{ij}\in P_j$.
    Then we have
    $$
    d=\prod_{1\le j\le s}[x_{1j},\ldots,x_{\ell j}],
    $$
    and $d_j:=[x_{1j},\ldots,x_{\ell j}]$ has order $p_j^{n_j}$ for every $j\in\{1,\ldots,s\}$.
    Fix now $j^*\in\{1,\ldots,s\}$ and take $d^*=[z_{1},\ldots,z_{\ell}]\in\mathcal{D}_\ell^{(j^*)}$, with $z_{i}\in P_{j^*}$.
    Write also $p_{j^*}^{n^*}$ for the order of $d^*$, and note that, as we have just seen, $n^*\ge n_{j^*}$.
    Set
    $$
    g=d^*\prod_{\substack{1\le j\le s\\ j\neq j^*}}d_j=\Big[z_{1j^*}\prod_{\substack{1\le j\le s\\ j\neq j^*}} x_{1j},\ldots,z_{\ell j^*}\prod_{\substack{1\le j\le s\\ j\neq j^*}} x_{\ell j}\Big],
    $$
    and observe that $g$ is a commutator of length $\ell$ of order
    $$
    p_{j^*}^{n^*}\prod_{\substack{1\le j\le s\\ j\neq j^*}} p_j^{n_j}.
    $$
    This shows that $n^*=n_{j^*}$ and $g\in\mathcal{D}_\ell$.
    Moreover, $d^*$ is a power of $g$, which implies $d^*\in D$.
    As both $j^*$ and $d^*$ were chosen arbitrarily, we conclude that $\mathcal{D}_\ell^{(j)}\subseteq D$ for every $j\in\{1,\ldots,s\}$.

    Now, in order to establish (\ref{eq: stability proof}), write $y_i=\prod_{1\le j\le s}y_{ij}$, and for every $j\in\{1,\ldots,s\}$, define $N_j=N\cap P_j$.
    Then we have $N_j\trianglelefteq P_j$, and since $d_j$ is a power of $d$, we also have $N_j\le C_{P_j}(d_j)$.
    Moreover, if for every $j\in\{1,\ldots,s\}$ we define $\varphi^{(j)}$ in the same way as $\varphi$ but using $d_j=[x_{1j},\ldots,x_{\ell j}]$ instead of $d=[x_1,\ldots,x_\ell]$, then it is clear that $\varphi^{(j)}_I(N_j)=1$ for every $I\subseteq\{k+1,\ldots,\ell\}$.
    Hence, by (i), it follows that $\varphi^{(j)}_{\{k\}\cup I}(x_{ij}y_{ij})\in \langle\mathcal{D}_\ell^{(j)}\rangle\subseteq D$.
    Since
    $$
    \varphi_{\{k\}\cup I}(x_{i}y_{i})
    =
    \prod_{1\le j\le s}\varphi^{(j)}_{\{k\}\cup I}(x_{ij}y_{ij}),
    $$
    the result follows.
    
    Remove now the assumption that $G$ is finite, and let $H=\langle x_1,\ldots,x_\ell,y_1,\ldots,y_\ell\rangle$.
    We observe that $d,\varphi_{\{k\}\cup I}(x_iy_i)\in H$, and therefore, if (\ref{eq: stability proof}) holds in $H$, it will also hold in $G$.
    Thus, we may assume that $G$ is finitely generated, and hence nilpotent.
    In particular, $G$ is residually finite.

    Since $D$ is finite by Proposition \ref{prop: infinitely many} and Lemma \ref{lem: basic results}(ii), there exists $M\trianglelefteq G$ of finite index such that $M \cap D=1$.
    Therefore, the element $dM$ is a commutator of length $\ell$ of maximal order in $G/M$, and $NM/M\trianglelefteq G/M$ with $NM/M\le C_{G/M}(dM)$ such that $\varphi_I(NM/M)=1$ for every non-empty $I\subseteq\{k+1,\ldots,\ell\}$.
    Thus, as we have just shown, $ \varphi_{\{k\}\cup I}(x_{i}y_{i})\in DM$, and since this follows for every $M\trianglelefteq G$ of finite index, we conclude that $ \varphi_{\{k\}\cup I}(x_{i}y_{i})\in D$.
    
    \vspace{5pt}

    Finally, suppose (iii) holds. 
    First, we claim that $d\varphi_{\{k\}\cup I}(z_i)^{-1}$ is a commutator of length $\ell$.
    By Lemma \ref{lem: iker marta 2}, there exist $g_{k+1},\ldots,g_{\ell+1}\in N$ such that
    $$
    1=[x_1,\ldots,x_{k-1},y_k^{-1}y_k,x_{k+1}y_{k+1},\ldots,x_\ell y_\ell]
    =
    \varphi_{\{k,\ldots,\ell\}}(a_i)^{g_{\ell+1}}
    \varphi_{\{k\}\cup I}(z_i),
    $$
    where $a_k=y_k^{-1}$ and $a_i=x_ig_i$ when $i>k$.
    In particular, $\varphi_{\{k\}\cup I}(z_i)^{-1}=\varphi_{\{k,\ldots,\ell\}}(a_i)^{g_{\ell+1}}$.
    Define now $b_k=x_ka_k$ and $b_i=a_i$ for $i>k$, and observe that by Lemma~\ref{lem: iker marta 2} and Lemma~\ref{lem: remove conjugates}, we have
    $$
    \varphi_{\{k,\ldots,\ell\}}(b_i)^{g_{\ell+1}}
    =d\varphi_{\{k,\ldots,\ell\}}(a_i)^{g_{\ell+1}}
    =d\varphi_{\{k\}\cup I}(z_i)^{-1},
    $$
    so the claim follows. 

    Now, write for brevity $g=\varphi_{\{k\}\cup I}(x_iy_i)$, $g'=\varphi_{\{k,\ldots,\ell\}}(b_i)^{g_{\ell+1}}$ and $h=\varphi_{\{k\}\cup I}(z_i)$, so that we have $g=dh$ and $g'=dh^{-1}$.
    As in part (i), we suppose, for a contradiction, that $g,h\not\in\mathcal{D}_\ell$, as otherwise the result follows from (\ref{eq: stability proof}).
    Moreover, since $h\not\in\mathcal{D}_\ell$, it follows from $g'=dh^{-1}$ that $g'\not\in\mathcal{D}_\ell$.
    Also, we can deduce from $h\in N$ that $g, g'$ and $h$ commute, and by a symmetrical argument, we may assume without loss of generality that $p^{\alpha}$ does not divide the order of $h$.
    Now, since $g=dh$, we deduce that $p^{\alpha}q^{\beta}$ divides the least common multiple of the orders of $g$ and $h$, and hence $p^{\alpha}$ must divide the order of $g$.
    For the same reason, $p^{\alpha}$ must also divide the order of $g'$, and since $g,g'\not\in\mathcal{D}_\ell$, it follows that $q^{\beta}$ does not divide the order of $g$ nor the order of $g'$.
    Let $a,b$ be positive integers, both coprime to $q$, such that $g^{q^{\beta-1}a}=(g')^{q^{\beta-1}b}=1$, and set $c=q^{\beta-1}ab$.
    Then,
    $$
    d^{2c}=d^cd^c=(gh^{-1})^c(g'h)^c=g^c(g')^c=1,
    $$
    which is a contradiction because $q$ is odd and $q^{\beta}$ does not divide $2ab$.
    Therefore, $g$ or $h$ must belong to $\mathcal{D}_\ell$, and (\ref{eq: stability proof}) follows again from (\ref{eq: stability proof 2}).
\end{proof}

For the rest of this section, whenever $k\in\{1,\ldots,\ell\}$ we write $J_k$ to denote $\{k,\ldots,\ell\}$ (for convenience, we also write $J_{\ell+1}$ for $\varnothing$).
We write $\mathcal{I}_k$ for $\mathcal{P}(J_{k})$ and define a lexicographic-like order in $\mathcal{I}:=\mathcal{I}_1$ in the following way:
\begin{itemize}
    \item We have $\varnothing<I$ for every non-empty $I\in\mathcal{I}$.
    \item For $I,J\in \mathcal{I}$, if $\min(J)<\min(I)$, then $I<J$.
    \item For $I,J\in \mathcal{I}$, if $\min(J)=\min(I)$, then $I<J$ if and only if $I\setminus\{\min(I)\}<J\setminus\{\min(J)\}$.
\end{itemize} 

The next two lemmas, Lemma \ref{lem: giochetto 1} and Lemma \ref{lem: giochetto 2},  will be key for the inductive step in the proof of Theorem \ref{thm: main(i)}. 

\begin{lemma}
    \label{lem: giochetto 1}
    Let $G$ be a group such that $|\mathcal{D}_\ell|=m<\infty$. Let $d=[x_1,\ldots,x_\ell]\in\mathcal{D}_\ell$ and write $n$ for the order of $d$. Assume that $G$ is $d$-stable. Let $I\subseteq \{1,\ldots,\ell\}$, and suppose there exists a finite index normal subgroup $M\trianglelefteq G$ such that:
    \begin{enumerate}[(i)]
        \item There is a positive integer $b$ such that $|\varphi_J(M)^G|<b$ for every $J<I$.
        \item If $u\in I$, then $\varphi_{(I\setminus J_u)\cup J}(M)=1$ for every non-empty $J\in \mathcal{I}_{u+1}$.
    \end{enumerate}
    Then there exists a normal subgroup $M^*\trianglelefteq G$ of $(m,\ell,b,|G:M|)$-bounded index such that $|\varphi_I(M^*)^G|$ is $(m,n,\ell,b)$-bounded.
\end{lemma}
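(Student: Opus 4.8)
My plan is to choose $M^{*}$ to be the intersection of $M$ with the centraliser of a suitable $(\ell,b)$-bounded normal subset of $G$, and then to prove the much sharper statement that $\varphi_{I}$ carries $M^{*}$ into the finite subgroup $D=\langle\mathcal D_{\ell}\rangle$; the bound on $|\varphi_{I}(M^{*})^{G}|$ then comes for free, since $|D|$ is $(m,n)$-bounded by Lemma~\ref{lem: basic results}(ii). Thus hypothesis~(i) will only be used to control the index of $M^{*}$, hypothesis~(ii) to run a descent, and $d$-stability just once.

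Write $I=\{i_{1}<\dots<i_{t}\}$ with $k=i_{1}$ (the case $I=\varnothing$ being trivial), and for $0\le j\le t$ put $P_{j}=\{i_{1},\dots,i_{j}\}$. Every proper prefix satisfies $P_{j}<I$ for $j<t$, so by~(i) the set $S=\bigcup_{j=0}^{t-1}\varphi_{P_{j}}(M)^{G}$ is a $G$-invariant family of commutators of length $\ell$ of size $<\ell b$, and it contains $d^{G}=\varphi_{P_{0}}(M)^{G}$. Each $s\in S$ lies in a $G$-invariant subset of size $<b$, hence $|G:C_{G}(s)|<b$; therefore $M^{*}:=M\cap\bigcap_{s\in S}C_{G}(s)$ is normal in $G$ of $(\ell,b,|G:M|)$-bounded index. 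Since $M^{*}\le M$, it still satisfies~(i) and~(ii), and it centralises $d$ and every member of $S$.

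The heart of the proof is a \emph{peeling} induction on $a=1,\dots,t$. For $y_{i_{1}},\dots,y_{i_{t}}\in M^{*}$ let $v^{(a)}$ be the $\ell$-tuple having $y_{i_{s}}$ in position $i_{s}$ for $s\le a$, $x_{i_{s}}y_{i_{s}}$ in position $i_{s}$ for $s>a$, and $x_{i}$ in the remaining positions; I claim $\varphi_{I}(v^{(a)})\in D$ for every $a$, and along with it that $c_{a}$, the commutator of the tuple obtained from $(x_{1},\dots,x_{\ell})$ by replacing position $i_{s}$ with $y_{i_{s}}$ for $s\le a$, lies in $D$ (indeed $c_{a}=\varphi_{I}(v^{(a)})$ with $y_{i_{a+1}}=\dots=y_{i_{t}}=1$, and $c_{0}=d$). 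At stage $a$ I apply Lemma~\ref{lem: remove conjugates}, and hence $(\ref{eq: stability})$, to the modified base tuple defining $c_{a-1}$, with distinguished coordinate $i_{a}$ and $N=M^{*}$: this is legitimate because $M^{*}\le C_{G}(c_{a-1})$ (as $c_{a-1}\in\varphi_{P_{a-1}}(M^{*})\subseteq S$), and because for every non-empty $J\subseteq\{i_{a}+1,\dots,\ell\}$ the corresponding map of the modified tuple takes $M^{*}$ into $\varphi_{P_{a-1}\cup J}(M^{*})$, which is trivial by~(ii) with $u=i_{a}$, using $P_{a-1}\cup J=(I\setminus J_{i_{a}})\cup J$. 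Taking the index set in $(\ref{eq: stability})$ to be $\{i_{a+1},\dots,i_{t}\}$ then gives precisely $\varphi_{I}(v^{(a-1)})=c_{a-1}\,\varphi_{I}(v^{(a)})$. For $a=1$ one additionally applies $d$-stability to this same data, obtaining $\varphi_{I}(v^{(0)})=\varphi_{I}(x_{i}y_{i})\in D$, whence $\varphi_{I}(v^{(1)})=d^{-1}\varphi_{I}(v^{(0)})\in D$; for $a\ge 2$ the inductive hypothesis gives $\varphi_{I}(v^{(a-1)}),c_{a-1}\in D$, so $\varphi_{I}(v^{(a)})=c_{a-1}^{-1}\varphi_{I}(v^{(a-1)})\in D$.

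Setting $a=t$ yields $\varphi_{I}(y_{i_{1}},\dots,y_{i_{t}})\in D$ for all $y_{i_{s}}\in M^{*}$, i.e. $\varphi_{I}(M^{*})\subseteq D$, so $|\varphi_{I}(M^{*})^{G}|\le|D|$, which is $(m,n)$-bounded; together with the index bound on $M^{*}$ this is the assertion. I expect the delicate part to be the bookkeeping in the peeling step: checking at each stage that the modified base tuple together with $M^{*}$ meets the hypotheses of Lemma~\ref{lem: remove conjugates} — in particular that the vanishing one needs is exactly the instance of~(ii) indexed by $u=i_{a}$ — and carefully tracking the conjugation ambiguities hidden in $(\ref{eq: stability})$ so that the telescoping identity really stays inside $D$.
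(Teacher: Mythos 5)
Your argument is correct and follows the same line as the paper: the same choice of $M^*$ (namely $M$ intersected with the centralizers of the bounded $G$-invariant sets supplied by hypothesis~(i)), the same peeling identity obtained by alternating Lemma~\ref{lem: iker marta 2} with Lemma~\ref{lem: remove conjugates} at each distinguished coordinate of $I$, and a single invocation of $d$-stability at the outermost stage; and your verification that at stage $a$ the modified base tuple meets the hypotheses of Lemma~\ref{lem: remove conjugates} via~(ii) with $u=i_a$ is exactly what the paper does. The one genuine refinement is that by specialising the tail variables to $1$ inside the inductive claim you also see that every partial commutator $c_a=\varphi_{P_a}(y_i)$ already lies in $D$, and hence $\varphi_I(M^*)\subseteq D$; the paper stops at the weaker containment $\varphi_I(y_i)\in\bigl(\prod_{j<t}\varphi_{I_j}(M^*)\bigr)D$ and appeals to hypothesis~(i) a second time to bound that product, which is why its final estimate carries the $b$-dependence. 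Your sharper bound $|\varphi_I(M^*)^G|\le|D|$ is strictly stronger and turns~(i) into a tool for index control only, which is a tidy observation worth recording.
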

\begin{proof}
    Let $I=\{u_1,\ldots,u_t\}$, and write $I_j=\{u_1,\ldots,u_j\}$ for every $j\in\{0,\ldots,t\}$ (for $j=0$, we take $I_0=\varnothing$).
    Also, note that (i) implies that the subgroup $C_G(\varphi_J(M)^G)$ has $b$-bounded index in $G$ for every $J<I$, and since $C_G(\mathcal{D}_\ell)$ has $m$-bounded index in $G$, it follows that
    $$
    M^*:=M_G\cap C_G(\mathcal{D}_\ell)\cap\bigcap_{J<I}C_G\big(\varphi_J(M)^G\big)
    $$
    is a normal subgroup of $G$ of $(m,\ell,b,|G:M|)$-bounded index, where $M_G$ stands for the normal core of $M$ in $G$.
    Fix $y_{u_1},\ldots,y_{u_t}\in M^*$.
    We will first prove that for every $s\in\{0,\ldots,t\}$ we have
    \begin{equation}
        \label{eq: varphi complicated}
        \varphi_I(x_iy_i)=\bigg(\prod_{j=0}^{s-1}\varphi_{I_j}(y_i)\bigg) \varphi_I(z_i),
    \end{equation}
    where $z_i=y_i$ for $i \in I_{s}$ and $z_i=x_iy_i$ for $i \in I\setminus I_{s}$.
    
    If $s=0$ then the equality follows trivially, so let $s>0$ and assume by induction that
    \begin{equation}
        \label{eq: varphi induction}
        \varphi_I(x_iy_i)=\bigg(\prod_{j=0}^{s-2}\varphi_{I_j}(y_i)\bigg) \varphi_I(h_i),
    \end{equation}
    where $h_i=y_i$ for $i \in I_{s-1}$ and $h_i=x_iy_i$ for $i \in I\setminus I_{s-1}$.
    Now, applying Lemma \ref{lem: iker marta 2} to $\varphi_I(h_i)$ (taking $k=s$ in the lemma), and removing all the new conjugates by using Lemma \ref{lem: remove conjugates} (taking $d=\varphi_{I_s}(y_i)$, $k=u_s$ and $N=M^*$ in the lemma and using (ii)), we obtain
    $$
    \varphi_I(h_i)=\varphi_{I_{s-1}}(y_i)\varphi_I(z_i),
    $$
    where $h_i$ is as in (\ref{eq: varphi induction}) and $z_i=y_i$ if $i\in I_s$ and $z_i=x_iy_i$ for $i \in I\setminus I_{s}$.
    In particular, (\ref{eq: varphi complicated}) follows for every $s\in\{0,\ldots,t+1\}$, and taking $s=t$ we obtain
    $$
    \varphi_I(x_iy_i)=\prod_{j=0}^{t} \varphi_{I_j}(y_i).
    $$

    Now, since $G$ is $d$-stable, we deduce from (ii) that $\varphi_I(x_iy_i)\in D$, and since $\varphi_I(x_iy_i)=\left(\prod_{j=0}^{t-1} \varphi_{I_j}(y_i)\right)\varphi_I(y_i)$ by (\ref{eq: varphi complicated}), we obtain
    $$
    \varphi_I(y_i)\in\Big(\prod_{j=0}^{t-1}\varphi_{I_j}(M^*)\Big)D.
    $$
    We know from Proposition \ref{prop: bounded order} and Lemma \ref{lem: basic results}(ii) that $|D|$ is $(m,n)$-bounded, and since the $y_i\in M^*$ were  chosen arbitrarily, it follows from (i) that $|\varphi_I(M^*)^G|$ is $(m,n,\ell,b)$-bounded. The proof is complete.
\end{proof}

\begin{lemma}
    \label{lem: giochetto 2}
    Let $G$ be an $r$-generator group and $x_1,\ldots,x_\ell\in G$.
    Set $d=[x_1,\ldots,x_\ell]$.
    Choose $I\subseteq\{1,\ldots,\ell\}$ and write $s=\max(I)$.
    Suppose there exists a finite index subgroup $M\le G$ such that:
    \begin{enumerate}[(i)]
        \item $|\varphi_I(M)^G|<\infty$.
        \item $\varphi_{I\cup J}(M)=1$ for every non-empty $J\in \mathcal{I}_{s+1}$.
    \end{enumerate}
    Then $G$ contains a normal subgroup $M^*$ of finite $(r,\ell,|G:M|,|\varphi_I(M)^G|)$-bounded index such that $\varphi_I(M^*)=1$.
\end{lemma}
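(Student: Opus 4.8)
The plan is to show that, after passing to a normal subgroup of bounded index, the map $\varphi_I$ behaves like a homomorphism in its last argument, and then to exploit the finite generation of $M$. As a first reduction, note that $\varphi_I(M)^G$ is finite by (i), so $C_G(\varphi_I(M)^G)$ has $|\varphi_I(M)^G|$-bounded index in $G$. Replacing $M$ by $M_G\cap C_G(\varphi_I(M_G)^G)$ — where $M_G$ is the normal core of $M$ in $G$ — we may assume that $M\trianglelefteq G$ and $M\le C_G(\varphi_I(M)^G)$; this multiplies the index by a factor bounded in terms of $|G:M|$ and $|\varphi_I(M)^G|$, and replaces $\varphi_I(M)^G$ by a subset, so it suffices to prove the statement for this new $M$. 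Write $I=\{u_1,\dots,u_t\}$ with $u_t=s=\max(I)$, and assume $t\ge1$.

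\emph{Key step.} Fix $\vec a=(a_{u_1},\dots,a_{u_{t-1}})\in M^{t-1}$ and let $\psi_{\vec a}\colon M\to G$ be given by $\psi_{\vec a}(y)=\varphi_I(a_{u_1},\dots,a_{u_{t-1}},y)$. I claim $\psi_{\vec a}$ is a group homomorphism. For $y,y'\in M$, apply Lemma~\ref{lem: iker marta 2} at the slot $s$ to the entry $y\cdot y'$: this writes $\psi_{\vec a}(yy')$ as a conjugate of the commutator $[a_{u_1},\dots,a_{u_{t-1}},y,x_{s+1}^{\,c_{s+1}},\dots,x_\ell^{\,c_\ell}]$ times $\psi_{\vec a}(y')$, with all $c_j\in\langle y'\rangle^G\le M$. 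Since $M\trianglelefteq G$, each $x_j^{\,c_j}$ equals $x_j$ times an element of $M$, and hypothesis (ii) says precisely that $\varphi_{I\cup J}(M)=1$ for every non-empty $J\subseteq\{s+1,\dots,\ell\}$. Hence Lemma~\ref{lem: remove conjugates}, applied with its $d$ taken to be $\psi_{\vec a}(y)\in\varphi_I(M)^G$, with $k=s$ and $N=M$ (the inclusion $N\le C_G(d)$ being guaranteed by the reduction), shows that the bracketed commutator equals $\psi_{\vec a}(y)$; the outer conjugation is by an element of $M\le C_G(\psi_{\vec a}(y))$ and is therefore trivial, so $\psi_{\vec a}(yy')=\psi_{\vec a}(y)\,\psi_{\vec a}(y')$.

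\emph{Finiteness and conclusion.} In particular $\psi_{\vec a}(y)^n=\psi_{\vec a}(y^n)\in\varphi_I(M)\subseteq\varphi_I(M)^G$ for every $n$, so every element of $\varphi_I(M)$ has order at most $|\varphi_I(M)^G|$; by Dicman's Lemma~\ref{lem: basic results}(ii) the normal subgroup $L:=\langle\varphi_I(M)^G\rangle$ is finite of $|\varphi_I(M)^G|$-bounded order, and each $\psi_{\vec a}$ is a homomorphism $M\to L$. Now $M$, having finite index in the $r$-generator group $G$, is generated by a number of elements bounded in terms of $r$, $|G:M|$ and $|\varphi_I(M)^G|$, so there is only a bounded number of homomorphisms from $M$ to the symmetric group $\mathrm{Sym}(L)$; let $M^*$ be the intersection of all their kernels. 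Then $M^*$ is characteristic in $M$, hence normal in $G$, and of $(r,\ell,|G:M|,|\varphi_I(M)^G|)$-bounded index. Composing any $\psi_{\vec a}$ with the regular embedding $L\hookrightarrow\mathrm{Sym}(L)$ leaves its kernel unchanged, so $M^*\le\ker\psi_{\vec a}$ for every $\vec a\in M^{t-1}$; therefore $\varphi_I(z_{u_1},\dots,z_{u_t})=\psi_{(z_{u_1},\dots,z_{u_{t-1}})}(z_{u_t})=1$ for all $(z_{u_1},\dots,z_{u_t})\in(M^*)^t$, i.e.\ $\varphi_I(M^*)=1$, as required.

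\emph{Main obstacle.} The delicate point is the key step: one must keep careful track of the conjugating elements produced by Lemma~\ref{lem: iker marta 2} and check at each application of Lemma~\ref{lem: remove conjugates} that its vanishing hypothesis is exactly what hypothesis (ii) supplies and that $M$ centralises the commutator playing the role of $d$ — which is exactly what hypothesis (i) provides after the reduction. Everything else is routine manipulation of finite-index subgroups of finitely generated groups.
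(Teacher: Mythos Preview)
Your argument is correct and follows the paper's route: show via Lemmas~\ref{lem: iker marta 2} and~\ref{lem: remove conjugates} that $\varphi_I$ is a homomorphism in its $s$-slot when restricted to $M$, then intersect the resulting kernels. Your preliminary reduction to $M\le C_G(\varphi_I(M)^G)$ makes explicit the centralizer hypothesis that Lemma~\ref{lem: remove conjugates} requires (the paper is somewhat terse on this point), and your closing step---Dicman's Lemma plus counting homomorphisms into $\mathrm{Sym}(L)$---is an equivalent substitute for the paper's appeal to Hall's theorem on the number of subgroups of bounded index in a finitely generated group.
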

\begin{proof}
    Write $I=\{u_1,\ldots,u_t\}$ and let $T=\{u_{t}+1,\ldots,\ell\}$.
    We may assume without loss of generality that $M$ is normal in $G$, and for each $\mathbf{y}=(y_{u_1},\ldots,y_{u_{t-1}}) \in M^{t-1}$ define the map
    \begin{align*}
        \alpha_{\mathbf{y}}:M&\longrightarrow M\\
        y_{u_t}&\longmapsto \varphi_{I}(y_i).
    \end{align*}
    Here $y_{u_t}$ ranges over $M$. 
    
    We will show that $\alpha_{\mathbf{y}}$ is a homomorphism for every $\mathbf{y}\in M^{t-1}$.
    Indeed, by Lemma \ref{lem: iker marta 2}, for every $y_{u_t},g_{u_t} \in M$ there exist $g_{u_t+1},\ldots, g_{\ell}\in M$ such that
    $$
    \varphi_{I}(z_i)=\varphi_{I \cup T}(a_i)\varphi_{I}(b_i),
    $$
    where $z_i=y_i$ if $i \in I\setminus\{u_t\}$ and $z_{u_t}=y_{u_t}g_{u_t}$;
    $a_i=y_i$ if $i \in I$ and $a_i=x_ig_i$ if $i \in T$;
    and $b_i=y_i$ if $i \in I\setminus \{u_t\}$ and $b_{u_t}=g_{u_t}$.
    Now, it follows from (ii) and from Lemma $\ref{lem: remove conjugates}$ that $\varphi_{I \cup T}(a_i)=\varphi_I(y_i)$.
    Hence,
    $$
    \varphi_{I}(z_i)=\varphi_{I}(y_i)\varphi_{I}(b_i),
    $$
    and, as claimed, $\alpha_{\textbf{y}}$ is a homomorphism for every $\textbf{y} \in M^{t-1}$.
    
    For each $\textbf{y}\in M^{t-1}$, let $K_{\mathbf{y}}$ be the kernel of $\alpha_{\textbf{y}}$, and define
    $$
    M^*=\bigcap_{\mathbf{y}\in M^{t-1}} K_{\mathbf{y}}.
    $$
    Since $\Ima(\alpha_{\mathbf{y}})\subseteq \varphi_I(M)$, it follows that $|M:K_{\mathbf{y}}|\le|\varphi_I(M)|$ for every $\mathbf{y}\in M^{t-1}$, and since $G$ is finitely generated, the number of different $K_{\mathbf{y}}$ is $(r,|\varphi_I(M)|)$-bounded (see \cite[Theorem 7.2.9]{mhall}).
    Therefore, $M^*$ has $(r,|\varphi_I(M)|)$-bounded index in $G$ and satisfies \mbox{$\varphi_I(M^*)=1$}.
\end{proof}

We are now ready to complete the proof of Theorem \ref{thm: main}. Indeed, it will directly follow from Proposition \ref{prop: D-stable}, Proposition \ref{prop: virtually implies finite-by} and the following theorem.

\begin{theorem}
    \label{thm: main(i)}
    Let $G$ be an $r$-generator group such that $|\mathcal{D}_\ell|=m<\infty$, and let $d=[x_1,\ldots,x_\ell]\in\mathcal{D}_\ell$. Assume that $G$ is $d$-stable. Then $G$ contains a subgroup $M$ of $(m,\ell,r)$-bounded index such that $\gamma_\ell(M)=1$.
\end{theorem}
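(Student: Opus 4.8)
The conclusion $\gamma_\ell(M)=1$ is equivalent to $\varphi_{\{1,\dots,\ell\}}(M)=1$, since $\varphi_{\{1,\dots,\ell\}}$ is just the left‑normed $\ell$‑fold commutator map and does not involve the fixed $x_i$ at all. So the plan is to run through the subsets $I\in\mathcal I$, in the order introduced above, building a descending chain of finite‑index normal subgroups $G=M_0\ge M_1\ge\cdots$ in which each step has index bounded in terms of $m,n,\ell,r$, and keeping the invariant that, once $I$ has been treated, the current subgroup $M$ satisfies $\varphi_J(M)=1$ for every nonempty $J$ treated so far. The maximal element of $\mathcal I$ is $\{1,\dots,\ell\}$, so when we reach it we have $\gamma_\ell(M)=1$. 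At the step treating $I$ I would first apply Lemma~\ref{lem: giochetto 1}, shrinking $M$ by a bounded factor so that $|\varphi_I(M)^G|$ becomes finite and bounded — this is the step that consumes the hypothesis that $G$ is $d$‑stable, together with the finiteness of $D=\langle\mathcal D_\ell\rangle$ (Proposition~\ref{prop: infinitely many} and Lemma~\ref{lem: basic results}(ii)) — and then apply Lemma~\ref{lem: giochetto 2}, shrinking $M$ by a further bounded factor so that $\varphi_I(M)=1$; this second step is where finite generation of $G$ is used, through the finiteness of the family of kernels of the homomorphisms $\alpha_{\mathbf{y}}$. The base case $I=\varnothing$ is vacuous, so one may start from $M_0=G$; and since $|\mathcal I|=2^\ell$ is $\ell$‑bounded, the final subgroup is reached after $\ell$‑boundedly many passages and so has $(m,n,\ell,r)$‑bounded index.

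The main obstacle is arranging the order of treatment so that, whenever we are about to invoke Lemma~\ref{lem: giochetto 1} or Lemma~\ref{lem: giochetto 2} for a set $I$, the triviality hypotheses on the auxiliary subsets are already in force. For Lemma~\ref{lem: giochetto 1} this is comfortable: a direct inspection of the order shows that every set $(I\setminus J_u)\cup J$ occurring in its hypothesis~(ii) satisfies $(I\setminus J_u)\cup J<I$, while hypothesis~(i) refers only to sets $J<I$, so both are provided by the invariant once all predecessors of $I$ in $\mathcal I$ have been treated — provided one also maintains a uniform bound $b$ on the $|\varphi_J(M)^G|$ throughout. For Lemma~\ref{lem: giochetto 2} the difficulty is genuine: its hypothesis~(ii) demands $\varphi_{I\cup J}(M)=1$ for the upward extensions $I\cup J$ with $\varnothing\neq J\subseteq\{\max(I)+1,\dots,\ell\}$, and these are strictly above $I$ in the order. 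One gets around this by noting that these extensions all live in the block $\{\,I\cup L:L\subseteq\{\max(I)+1,\dots,\ell\}\,\}$, which is closed under taking further upward extensions and has a top element $I\cup\{\max(I)+1,\dots,\ell\}$ for which hypothesis~(ii) of Lemma~\ref{lem: giochetto 2} is vacuous; one therefore clears this whole block before finishing the treatment of $I$, by a nested recursion whose well‑foundedness — and the fact that the further auxiliary sets it generates are either below $I$ in $\mathcal I$, hence already treated in the outer loop, or again inside the block — has to be checked carefully. This nested bookkeeping, together with the propagation of the uniform index bounds, is the technical heart of the proof.

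After all of $\mathcal I$ has been treated we have a subgroup $M$ of finite index with $\gamma_\ell(M)=1$, the intermediate bounds a priori also involving the order $n$ of $d$. Replacing $M$ by its normal core (still of finite index), the group $G$ is now finitely generated and nilpotent‑by‑finite, in particular residually finite, so Proposition~\ref{prop: bounded order} applies and shows that $n$ is $m$‑bounded; hence the index of $M$ is $(m,\ell,r)$‑bounded, which is precisely Theorem~\ref{thm: main(i)}. Theorem~\ref{thm: main} then follows at once: Proposition~\ref{prop: D-stable} guarantees that $G$ is $d$‑stable in each of the three listed situations, and Proposition~\ref{prop: virtually implies finite-by}, applied to this $M$, bounds $|\gamma_\ell(G)|$.
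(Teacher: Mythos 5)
Your plan is in essence the paper's proof: the same lexicographic order on $\mathcal{I}$, the same two workhorse lemmas applied in alternation, and the same concluding observation that $G$ becomes residually finite and virtually nilpotent, so Proposition~\ref{prop: bounded order} lets one replace the order $n$ of $d$ by an $m$-bound. The one place where your description diverges from the paper is the bookkeeping, which you correctly flag as the technical heart but leave informal. You propose to maintain the invariant ``$\varphi_J(M)=1$ for every nonempty $J$ treated so far''; note that this cannot be achieved in strict $\le$-order, since for instance $\varphi_{\{1\}}(M)=1$ can only be reached via Lemma~\ref{lem: giochetto 2} after $\varphi_{\{1,\ldots,\ell\}}(M)=1$ is already in hand, and one must also interleave Lemma~\ref{lem: giochetto 1} applications (which want smaller $J$ done first) with Lemma~\ref{lem: giochetto 2} applications inside the block (which want larger $\max$ done first). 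The paper resolves exactly this tension by replacing your simple invariant with three conditions carried along at each step $I_k$: boundedness of $|\varphi_{I_k}(M_k)^G|$; triviality of $\varphi_{(I_k\setminus J_u)\cup J}(M_k)$ for $u\in I_k$, $\varnothing\ne J\in\mathcal{I}_{u+1}$; and triviality of $\varphi_{(I_k\setminus J_u)\cup J}(M_k)$ for $J_u\subseteq I_k$, $\varnothing\ne J\in\mathcal{I}_u$, the last established by a reverse induction on the upper endpoint $v$ of a gap-free $J=\{u,\ldots,v\}$. That reverse induction on $v$ is precisely your ``nested recursion over the block,'' but carving the invariant this way is what makes the outer induction in $\le$-order close cleanly; your sketch would need an equivalent weakening of the running invariant to avoid circularity.
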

\begin{proof} 
    Write $\mathcal{I}=\{I_1,I_2,\ldots,I_{2^\ell}\}$ in such a way that $I_i\le I_j$ if and only if $i\le j$ (that is, $I_1=\varnothing$, $I_2=\{\ell\}$, $I_3=\{\ell-1\}$, $I_4=\{\ell-1,\ell\}$, $I_5=\{\ell-2\}$, and so on). Let $n$ be the order of $d$.
    
    We first claim that there exists a subgroup $M\le G$ of $(m,n,\ell,r)$-bounded index such that $\gamma_\ell(M)=1$.

    In order to prove the claim, we will show that for every $k\in\{1,\ldots,2^\ell\}$ there exists $M_k\trianglelefteq G$ of $(m,n,\ell,r)$-bounded index such that:
    \begin{enumerate}[(i)]
        \item \label{item: subset} $|\varphi_{I_k}(M_k)^G|$ is $(m,n,\ell,r)$-bounded.
        \item \label{item: lemma 1} If $u\in I_k$, then $\varphi_{(I_k\setminus J_u)\cup J}(M_k)=1$ for every non-empty $J\in \mathcal{I}_{u+1}$.
        \item \label{item: lemma 2} If $J_u\subseteq I_k$, then $\varphi_{(I_k\setminus J_u)\cup J}(M_k)=1$ for every non-empty $J\in \mathcal{I}_u$.
    \end{enumerate}
    The claim will thus follow by taking $k=2^\ell$, $u=1$ and $J=J_1$ in (\ref*{item: lemma 2}).

    We will proceed by induction on $k$.
    If $k=1$, then $I_k=\varnothing$ and the assertion follows easily.
    Indeed, (\ref*{item: lemma 1}) and (\ref*{item: lemma 2}) do not apply, and (\ref*{item: subset}) holds since $\varphi_{\varnothing}(M_k)=\{d\}$ and $d^G\subseteq\mathcal{D}_\ell$.
    Suppose then that $k>1$ and that for every $j<k$ there exists $M_j\trianglelefteq G$ of $(m,n,\ell,r)$-bounded index satisfying (\ref*{item: subset}), (\ref*{item: lemma 1}) and (\ref*{item: lemma 2}) with their respective subsets $I_j$.
    Define $M_k=\cap_{j<k} M_j$ and observe that $M_k\trianglelefteq G$ and that $M_k$ has $(m,n,\ell,r)$-bounded index in $G$.  

    \vspace{5pt}

    Let us show that (\ref*{item: lemma 1}) is satisfied.
    Let $u\in I_k$, and let $v$ be minimum such that $J_v\subseteq I_{k-1}$ (note that $v>1$, as otherwise $I_{k-1}=I_{2^\ell}$).
    Then we have
    $$
    I_k=(I_{k-1}\setminus J_v)\cup\{v-1\},
    $$
    and $\max(I_k)=v-1$.
    Now, if $u<v-1$, then $u\in I_{k-1}$, and
    $$
    (I_k\setminus J_u)\cup J=(I_{k-1}\setminus J_u)\cup J
    $$
    for every non-empty $J\in\mathcal{I}_{u+1}$.
    In such a case, the inductive hypothesis on (\ref*{item: lemma 1}) yields $\varphi_{(I_k\setminus J_u)\cup J}(M_k)=1$.
    Assume hence that $u=v-1$.
    Then
    $$
    (I_k\setminus J_u)\cup J=(I_{k-1}\setminus J_v)\cup J
    $$
    for every non-empty $J\in\mathcal{I}_{u+1}=\mathcal{I}_v$, and the inductive hypothesis on (\ref*{item: lemma 2}) now yields $\varphi_{(I_k\setminus J_u)\cup J}(M_k)=1$.
    In any case, (\ref*{item: lemma 1}) follows.

    \vspace{5pt}
    
    Now, it follows by the inductive hypothesis that $|\varphi_J(M_k)^G|$ is $(m,n,\ell,r)$-bounded for every $J<I_k$, so Lemma \ref{lem: giochetto 1} shows there is a subgroup $M^*\le M_k\le G$ of $(m,n,\ell,r)$-bounded index such that (\ref*{item: subset}) holds with $M^*$.
    Abusing notation and redefining $M_k:=M^*$, it is clear that both (\ref*{item: subset}) and (\ref*{item: lemma 1}) hold.

    \vspace{5pt}

    Lastly, we will show that (\ref*{item: lemma 2}) holds.
    Let $u$ be an index such that $J_u\subseteq I_k$ and choose a non-empty subset $J\in \mathcal{I}_u$.
    Suppose first that there exists $v\in \{u,\ldots,\ell\}$ such that $v\not\in J$ and $v<\max(J)$, and define
    $$
    I^*=(I_k\setminus J_u)\cup(J\setminus J_v)\cup \{v\}\ \ \text{ and }\ \ J^*=J\cap J_v.
    $$
    Then we have $I^*<I_k$, $v\in I^*$ and $J^*\in\mathcal{I}_{v+1}$, so by the inductive hypothesis and (\ref*{item: lemma 1}), we obtain that $\varphi_{(I^*\setminus J_v)\cup J^*}(M_k)=1$.
    Since $(I^*\setminus J_v)\cup J^*=(I_k\setminus J_u)\cup J$, this shows that
    \begin{equation}
        \label{eq: varphi=1}
        \varphi_{(I_k\setminus J_u)\cup J}(M_k)=1,
    \end{equation}
    and (\ref*{item: lemma 2}) follows in this case.
    
    Hence, we assume that $J=\{u,u+1,\ldots,v\}$ for some $v\le \ell$.
    If $v=\ell$, then by (\ref*{item: subset}) and Lemma \ref{lem: giochetto 2}, there exists $M^*\trianglelefteq G$ of $(m,n,\ell,r)$-bounded index such that $\varphi_{(I_k\setminus J_u)\cup J}(M^*)=1$, and as before, we may assume $M_k=M^*$.
    Suppose that $v<\ell$ and, by the reverse induction on $v$, that (\ref*{item: lemma 2}) holds whenever $J=\{u,u+1,\ldots,t\}$ with $t>v$.
    By (\ref*{item: subset}) we know that $|\varphi_{(I_k\setminus J_r)\cup J}(M_k)^G|$ is $(m,n,r,\ell)$-bounded, so by Lemma~\ref{lem: giochetto 2} it suffices to show that $\varphi_{(I_k\setminus J_r)\cup J\cup K}(M_k)=1$ for every $K\in\mathcal{I}_{v+1}$ (again, this will produce a new subgroup $M^*\trianglelefteq G$ of $(m,n,\ell,r)$-bounded index for which we may assume that $M_k=M^*)$.
    If $K=\{v+1,v+2,\ldots,t\}$ for some $t\ge v+1$, then the reverse induction on $v$ gives $\varphi_{(I_k\setminus J_u)\cup J\cup K}(M_k)=1$.
    Otherwise, we are in the same situation as in (\ref{eq: varphi=1}), and again we obtain that $\varphi_{(I_k\setminus J_u)\cup J\cup K}(M_k)=1$.
    Hence, (\ref*{item: lemma 2}) holds and the claim follows.
    
    As mentioned at the beginning of the proof, the subgroup $M:=M_{2^\ell}$ is as required.
    
    Observe that since $G$ is a finitely generated virtually nilpotent group, $G$ is residually finite, and therefore $n$ is $m$-bounded by Proposition \ref{prop: bounded order}.
    In particular, $|G:M|$ is $(m,\ell,r)$-bounded.
\end{proof}

\begin{proof}[Proof of Theorem \ref{thm: main}]
    Suppose $G$ satisfies any of the conditions in the statement.
    By Proposition \ref{prop: D-stable}, it follows that $G$ is $d$-stable for some $[x_1,\ldots,x_\ell]\in\mathcal{D}_\ell$, and hence, by Theorem \ref{thm: main(i)}, there exists a subgroup $M\le G$ of $(m,\ell,r)$-bounded index such that $\gamma_\ell(M)=1$.
    This proves the second half of the statement.
    The first half follows directly from Proposition \ref{prop: virtually implies finite-by}.    
\end{proof}

\end{document}